\newcommand{\ds}{\displaystyle}
\newtheorem{remark}{Remark}
\newtheorem{theorem}{Theorem}
\newtheorem{lemma}{Lemma}
\newtheorem{Cor}{Corollary}
\newtheorem{remarks}{Remarks}
\def \e {{\rm e}}
\def \R{{\mathbb{R}}}
\def \S{{\mathbb{S}}}
\newcommand{\be}{\begin{equation}}
\newcommand{\ee}{\end{equation}}
\def\p{\partial}
\def\n{\nabla}
\def\nablav{\bm\nabla}
\def\d{\hbox{d}}
\def\eq#1{\begin{equation}#1\end{equation}}
\def\eeq#1{\begin{eqnarray}#1\end{eqnarray}}
\def\eeqn#1{\begin{eqnarray*}#1\end{eqnarray*}}
\def\vx{{\bf x}}
\def\vu{{\bf u}}
\def\omegav{{\bm\omega}}
\def\vn{{\bf n}}
\begin{document}

\title{Stratified Radiative Transfer for Multidimensional Fluids\footnote{Will be submitted for publication in Compte-Rendus de M\'ecanique}}
\author{
Fran\c cois Golse\footnote{\emph{francois.golse@polytechnique.edu}, CMLS, Ecole polytechnique, 91128 Palaiseau Cedex, France},
Olivier Pironneau\footnote{\emph{olivier.pironneau@sorbonne-universite.fr }, LJLL, Sorbonne Universit\'e, Paris, France.}
}

\parindent=0pt
\begin{frontmatter}
\begin{abstract}
New mathematical and numerical results are given for the coupling of the temperature equation of a  fluid with  Radiative Transfer: existence and uniqueness and a convergent monotone numerical scheme.  The technique is shown to be feasible  for studying the temperature of lake Leman  heated by the sun and for the earth atmosphere to study the effects of greenhouse gases.
 \end{abstract}
\begin{keyword}
Radiative Transfer, Navier-Stokes equations, Integral equations, Numerical Method, Convergence, Climate.
\end{keyword}
\end{frontmatter}


\section{Introduction}

Fifty years ago, the second author was admitted to the prestigious Dept of Applied Math. \& Theoretical Physics at Cambridge, UK, headed then by Sir James Lighthill.
Two \texttt{ibm} card punchers connected to the computing center -- also one of the best in the world in those days-- had been relegated to the basement; to use them was frowned upon as a threat to the speciality of the lab: clever analytic approximations and other multiple scales expansions of special cases of the  Navier-Stokes equations.

It took a decade to prove that computer simulations for fluids were not only possible, but also useful to industry.  A colleague from the wind tunnels in Modane told us then that an airplane could never be designed and validated by a numerical simulation. True to this wrong prediction however, many ad-hoc turbulence models had to be devised: it was only by a combined theoretical, experimental and computational (TEC) effort that the world's first complete airplane could be simulated at Dassault Aviation in 1979 and that airplanes have since be flown safely without the difficult certification stamps of wind tunnels.

It was also a success of the top-down approach to CFD. The ``JLL''(Lions) school of applied mathematics had the luck of being taken seriously by a few French high-tech industry labs. This was not the case in the USA where the head of a national research funding agency  had ruled out variational methods (leading to finite volumes and finite elements for fluids) as ``incomprehensible by aeronautical engineers", thereafter forcing all numerical schemes to be in the class of body fitted structured meshes, an impossible task for airplanes.

The top-down approach to a problem could be defined by saying that the mathematical model is defined first, then shown to be well posed and then approximated numerically by convergent algorithms.  The bottom-up approach is when the problem is made of several modules, studied independently, and patched together  at the algorithmic level. 

The downside of the top-down approach - from functional analysis to numerical methods - is that it may  discard important faster algorithms for which convergence are not known. This was the case for compressible flows in the nineties for which the bottom-up approach  pragmatically patched different turbulence and/or numerical models in different zones with the drawback that it was difficult to assert that the computed solution was one of the original problem. 

In the numerical simulations which fill the supercomputing centers today, CFD is often only one part of a multi-physics model. Such are the combustion and climate computations.  Both need, at least, radiative transfer and chemistry modules.

While the top-down approach is successful in computational chemistry \cite{CANCES20033}, mathematical analysis of climate models is still in progress. The  three dimensional Primitive Equations with hydrostatic and geostrophic approximations have been shown to be well posed  (see \cite{LionsTemamWang},\cite{AZE},\cite{TIT} and the bibliography therein) and so are the multi-layered Shallow Water equations for the oceans \cite{CAS}; but even if the coupled ocean-atmosphere is mathematically well-posed, it is very far from the complete model used in climatology. No doubt when a new numerical climate project is proposed, such as \cite{DYN}, a top-down approach is made \cite{DUB}, but soon overwhelmed by the complexity of the task when more modules are added.

Radiative transfer -- one such module that needs to be added -- is essential in astrophysics \cite{CHA} to derive the composition of stars, in nuclear engineering to predict plasma\cite{DAU}, in combustion for engines \cite{AMA}, and many other fields like solar panels \cite{Zengeabi5484} and  even T-shirts \cite{Zengeabi5484}!

In the eighties, at CEA, R. Dautray \cite{DAU} headed a team of applied mathematicians  who used the top-down approach in nuclear engineering. The first author was in close contact with them.  But turning his expertise on radiative transfer to climate modeling is not straightforward.

Books on radiative transfer for the atmosphere are numerous, such as \cite{GOO}, \cite{BOH}and \cite{ZDU}; but to speed-up codes, the documentation manual of climate models reveal that many approximations are made.
For instance LMDZ refers to  a model proposed  by Fouquart \cite{FOU}\cite{MOR} which suggests that empirical formulas are used in addition to simplified numerical schemes to speed-up the computations.  The formulas for the absorption, scattering and albedo coefficients are complex and adapted to reproduce the experimental data.
In other words the gap is wide between practice and  fundamentals as seen by Fowler \cite{FOW} and Chandrasekhar \cite{CHA}, for instance.

Coupling radiative transfer to the Navier-Stokes system using the top-down approach is the topic of this article. The problem is shown well posed in the context of a stratified atmosphere and a numerical method -- derived from the mathematical proof of  well posedness -- is proposed. It is accurate in the sense that there are no singular functions or integrals to approximate. It is fast compared to the fluid solver to which it is coupled but of course not as fast as empirical formulas.

\section{Radiative transfer and the temperature equation}

Let us begin with a simple problem: the effect of sunlight on a lake $\Omega$. 
Let $I_\nu(\vx,\omega,t)$ be the light intensity of frequency $\nu$ at $\vx\in\Omega$, in the direction $\omega\in \S^2$, the unit sphere, at time $t\in(0,T)$.
 Let $T,\rho,\vu$ be the temperature,  density and  velocity in the lake. Energy , momentum and  mass conservations (see \cite{POM},\cite{FOW}) yields (\ref{onea}),(\ref{oneb}),(\ref{onec}):

\subsection{The fundamental equations}~

Given $I_\nu,T$ at time zero, find $I_\nu,T$ for all $\{\vx,\omegav,t,\nu\}\in\Omega\times\S_2\times(0,T)\times\R^+$ such that
\eeq{\label{onea}&&
\frac1c\p_t I_\nu + \omegav\cdot\nablav I_\nu+\rho\bar\kappa_\nu a_\nu\left[I_\nu-{\frac{1}{4\pi}\int_{\S^2}} p(\omegav,\omegav')I_\nu(\omegav')\d\omega'\right]
= \rho\bar\kappa_\nu(1-a_\nu) [B_\nu(T)-I_\nu],
\\ \label{oneb}& &
\p_tT+\vu\cdot\n T -\kappa_T\Delta T=-\nablav\cdot\int_0^\infty{\int_{\S^2}} I_\nu(\omegav')\omegav \d\omega \d\nu. 
\\ \label{onec}&&
\p_t\vu+\vu\cdot\n\vu-\frac{\mu_F}\rho\Delta\vu + \frac1\rho\n p={\bf g},\quad \n\cdot\vu=0,
\quad 
\p_t\rho+\n\cdot(\rho\vu)=0,
}
where $\nabla,\Delta$ are with respect to $\vx$, 
$\ds B_\nu(T)=\frac{2 \hbar \nu^3}{c^2[{\rm e}^\frac{\hbar\nu}{k T}-1]}$, is the Planck function,
  $\hbar$ is the Planck constant, $c$ is the speed of light in the medium and $k$ is 
the Boltzmann constant.
The absorption coefficient $\kappa_\nu:=\rho\bar\kappa_\nu$ is  the percentage of light absorbed per unit length, $a_\nu\in(0,1)$ is  the scattering albedo, $\frac1{4\pi}p(\omegav,\omegav')$ is the probability that a ray in the direction $\omegav'$ scatters in the direction $\omegav$. The constants $\kappa_T$ and $\mu_F$ are the thermal and molecular diffusions; ${\bf g}$ is the gravity. 

Existence of solution for (\ref{onec}) has been established by P-L. Lions \cite{PLL}.


As $c>>1$, in a regime where  $\frac1c\p_t I_\nu<<1$,  integrating (\ref{onea}) in $\bm\omega$  leads to an alternative form for (\ref{oneb}):
\eq{
\p_tT+\vu\cdot\n T -\kappa_T\Delta T
= - \int_0^\infty \rho \bar\kappa_\nu(1-a_\nu)\left(4\pi B_\nu(T)-\int_{\S^2} I_\nu(\omegav )\d\omega\right) \d\nu.
}
  As usual, boundary conditions  must be given. Dirichlet or Neumann conditions may be prescribed for $\vu$ and $T$ on $\p\Omega $.  For the light intensity equation, $I_\nu$ should be given  at all times on $\{ (\vx, \omegav) \in \p \Omega\times \S^2:\quad \vn(\vx)\cdot \omegav <0\}$, 
where $\vn$ is the outer unit normal of $\p\Omega$. Finally $\rho$ should be specified on on $\p\Omega $ when $\vu\cdot\vn<0$.

\subsection{Grey Medium} 
 When $\kappa_\nu$ and $a_\nu$ are independent of $\nu$ - a so-called {\it grey medium} (cf. \cite{FOW}, p. 70)- the problem  can be written in terms of $I=\int_0^\infty I_\nu\d\nu$:
\eeq{\label{oneam}&&
 \omegav\cdot\nablav I+\kappa a\left[I-{\frac{1}{4\pi}\int_{\S^2}} p(\omegav,\omegav')I(\omegav')\d\omega'\right]
= \kappa(1-a) (B_0T^4-I),
\\ \label{onebm}& &
\p_tT+\vu\cdot\n T -\kappa_T\Delta T=  - \kappa(1-a)4\pi\left( B_0 T^4-\frac1{4\pi}\int_{\S^2} I(\omegav )\d\omega\right) ,
}
where $B_0$ comes from the Boltzmann-Stefan law:
\[
\int_0^\infty \frac{2 \hbar \nu^3}{c^2[{\rm e}^\frac{\hbar\nu}{k T}-1]}\d\nu
=
\left(\frac{\hbar}{k T}\right)^{-4}\frac{2\hbar}{c^2}\int_0^\infty \frac{ \left(\frac{\hbar\nu}{k T}\right)^3}{{\rm e}^\frac{\hbar\nu}{k T}-1}\d\frac{\hbar\nu}{k T}
=B_0 T^4 \hbox{ with } B_0:=\frac{2 k^4}{\hbar^3c^2}\frac{\pi^4}{15}.
\]
~ 

\subsection{Vertically stratified cases: spatial invariance}~

Let $(x,y,z)$ be a cartesian frame with $z$ the altitude/depth.
The sun being very far, the light source on the lake is independent of $x$ and $y$. Then, assuming that $T'$ varies slowly with $x$ and $y$, in the sense that
\eeq{&&\label{slow}
 (H)~~~ \left.\hskip0.1\linewidth\begin{matrix}
\p_z I_\nu >> \p_x I_\nu,~\quad \p_z I_\nu >>\p_y I_\nu,
\cr 
\end{matrix}\right.
}
then (\ref{onea}),(\ref{oneb}) become \cite{ZDU}
 \eeq{&& \label{oneamu}
\mu\p_z I_\nu + \kappa_\nu I_\nu 
= \kappa_\nu(1-a_\nu) B_\nu(T)+\frac{\kappa_\nu a_\nu}2\int_{-1}^1 p(\mu,\mu')I_\nu(z,\mu')\d\mu'
\\ && \label{oneabmu}
~I_\nu(z_M,\mu)|_{\mu<0}= Q^-(\mu) B_\nu({\bar T_S}), ~I(z_m,\mu)|_{\mu>0}=0,
\\&&\label{onebmu}
\p_t T+\vu\cdot\n T  - \kappa_T\Delta T
=-4\pi\int_0^\infty  \kappa_\nu(1-a_\nu)\left(B_\nu(T)-\tfrac12\int_{-1}^1 I_\nu\d\mu\right) \d\nu,\quad \p_n T|_{\p\Omega}=0.
\cr&&
}
where $z_M(x,y)$ and $z_m(x,y)$ are max and min of z such that $(x,y,z)\in \Omega$, $\mu$ is the cosine of the angle ${\bm\omega}$ to the vertical axis, $Q^-(\mu)=-\mu Q'\cos\theta$ is the sunlight intensity when $\theta$ is the latitude, and ${\bar T_S}$ is the temperature of the sun; we have assumed that the sun is a black body and that no light comes back from the bottom of the lake. Here $\vu$ is given, solenoidal and regular enough for (\ref{onebmu}) to make sense.

\begin{remarks}
~

\begin{itemize}
\item Hypothesis (H) will hold if $T$ varies slowly with $x,y$. It will be so if $\vu$ is almost horizontal and the vertical cross sections of $\Omega$ depend slowly on $x,y$. Turbulent flows do not satisfy this criteria. 

\item According to our definition of top-down analysis, the problem investigated is  (\ref{oneamu}),(\ref{oneabmu}),(\ref{onebmu}), not (\ref{onea}),(\ref{oneb}),(\ref{onec}), justifying the restriction ``stratified'' in the title.
\item All terms of (\ref{onebmu}) must be kept, except maybe, $\kappa_T\p_{xx}T$ and $\kappa_T\p_{yy}T$, but neglecting them renders the boundary conditions mathematically difficult.
\item We shall ignore the mathematical difficulty induced by the boundary condition $\p_n T|_{\p\Omega}=0$ when the intersection of the side of the lake with the water surface is not at right angle.
\end{itemize}
\end{remarks}
~
\subsection{The vertically stratified grey problem}~

For a grey medium (\ref{oneamu}),(\ref{onebmu}) become
 \eeq{ \label{oneamugs}
(P^1)\left\{\begin{matrix}\ds \mu\p_z I + \kappa I 
= \kappa(1-a) B_0 T^4+\frac{\kappa a}2\int_{-1}^1 p I\d\mu'
,~I|_{z_M,\mu<0}=-\mu Q B_0 {\bar T_S}^4, ~I|_{z_m,\mu>0}=0,
\cr
\ds \p_t T+\vu\cdot\n T -\kappa_T\Delta T
=-4\pi\kappa(1-a)\left(B_0 T^4-\tfrac12\int_{-1}^1 I\d\mu\right),~~ \p_n T|_{\p\Omega}=0.
\end{matrix}
\right.}

\subsection{Elimination of $I$ when the scattering is isotropic}~

Denote the exponential integral and the mean light intensity respectively by 
\[
\ds E_m(x):=\int_0^1\mu^{m-2}{\rm e}^{-\frac x\mu}\d\mu, \qquad \ds J(z):=\tfrac12\int_{-1}^1 I(z,\mu)\d\mu.
\]  
Then the method of characteristics applied to (\ref{oneamugs})  gives 
\eq{
(P^2)\left\{\begin{matrix}\ds
J(z) = \tfrac12 Q B_0 {\bar T_S}^4E_3(\kappa(z_M-{z}))
+ \ds \tfrac12\int_{z_m}^{z_M} \kappa E_1(\kappa|s-z|)\left((1-a)B_0 T_s^4+a J(s)\right)\d s,
\cr 
\ds \p_t T+\vu\cdot\n T- \kappa_T\Delta T
=-4\pi\kappa(1-a)\left(B_0 T^4(z)- J(z)\right)
\end{matrix}\right.
}
Note that to improve readability, we write indifferently $T(z)$ or $T_z$.

\subsection{No scattering}~

Let $T_e(z)=\left(\tfrac12 Q E_3(\kappa|z_M-{z}|)\right)^\frac14 {\bar T_S}$ and assume that $a=0$, then
{\small\eq{\label{st4}
(P^3)\left\{~~~\begin{matrix}\ds
(4\pi\kappa B_0)^{-1}(\ds \p_t T+\vu\cdot\n T-\kappa_T\Delta T) + T^4
= T_e^4 + \tfrac12\int_{z_m}^{z_M} \kappa E_1(\kappa|s-{z}|)T_s^4\d s, \quad \p_n T|_{\p\Omega}=0.
\end{matrix}\right.
}}

\subsection{Algorithm for $(P^3)$ in the stationary static case}
Assume  $T$ stationary and $\vu=0$. Let $\bar\kappa_T=(4\pi\kappa B_0)^{-1}\kappa_T$. 

Generate $\{T^n\}_{n\geq 0}$ from $T^0=0$ by, 
\eeq{\label{algo}&&
\left|~~~\begin{matrix}\ds
(T^{n+\tfrac12})^4 := T_e^4 + \tfrac12\int_{z_m}^{z_M} \kappa E_1(\kappa|s-{z}|){T_s^n}^4\d s,\quad T^{n+\tfrac12}\ge 0
\cr
-\bar\kappa_T\Delta T^{n+1} + (T^{n+1}_+)^4 = (T^{n+\tfrac12})^4, \quad \p_n T^{n+1}|_{\p\Omega}=0.
\end{matrix}\right.
}
where $T_+=\max(T,0)$.
Note that $T\mapsto-\bar\kappa_T \Delta T+T_+^4$ is a monotone operator for which Newton or fixed point iterations can be applied to solve the PDE.  To prove monotone convergence, the following result is needed.
\begin{lemma} 
$
C_1(\kappa) :=\tfrac12 \max_z\int_0^Z \kappa E_1(\kappa|s-z])\d s < 1.
$
\end{lemma}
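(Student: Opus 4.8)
The plan is to reduce the claim to the elementary fact that $E_1$ has total mass one on the half-line, and that the integral appearing in $C_1(\kappa)$ is, for each fixed $z$, a sum of two \emph{truncated} copies of that mass, each strictly below one.

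First I would split the integral at the point $s=z$ where the kernel $E_1(\kappa|s-z|)$ is singular:
\[
\int_0^Z \kappa E_1(\kappa|s-z|)\d s=\int_0^z \kappa E_1(\kappa(z-s))\d s+\int_z^Z \kappa E_1(\kappa(s-z))\d s .
\]
The changes of variable $w=\kappa(z-s)$ in the first integral and $w=\kappa(s-z)$ in the second turn the right-hand side into $\int_0^{\kappa z}E_1(w)\d w+\int_0^{\kappa(Z-z)}E_1(w)\d w$, so that the entire dependence on $z$ is now carried only by the two finite upper limits.

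Next I would compute the total mass of $E_1$ directly from its definition $E_1(x)=\int_0^1\mu^{-1}\e^{-x/\mu}\d\mu$. By Fubini's theorem,
\[
\int_0^\infty E_1(w)\d w=\int_0^1\frac1\mu\Bigl(\int_0^\infty \e^{-w/\mu}\d w\Bigr)\d\mu=\int_0^1 1\,\d\mu=1 .
\]
Since $E_1>0$ on $(0,\infty)$ and the upper limits $\kappa z$ and $\kappa(Z-z)$ are finite, each of the two truncated integrals is strictly less than $1$; hence $\int_0^Z\kappa E_1(\kappa|s-z|)\d s<2$ for every fixed $z\in[0,Z]$, i.e.\ the quantity inside the maximum defining $C_1(\kappa)$ is pointwise below $1$.

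The one point that genuinely needs care is that this \emph{pointwise} strict inequality must survive the maximum over $z$: a crude supremum argument would only yield $C_1\le 1$. I would settle this by compactness. The map $z\mapsto\int_0^{\kappa z}E_1(w)\d w+\int_0^{\kappa(Z-z)}E_1(w)\d w$ is continuous on the compact interval $[0,Z]$, so its maximum is attained at some $z^\ast$, where the value remains strictly below $2$. Indeed, differentiating and using that $E_1$ is strictly decreasing shows the maximizer is $z^\ast=Z/2$, whence $C_1(\kappa)=\int_0^{\kappa Z/2}E_1(w)\d w<\int_0^\infty E_1(w)\d w=1$, which is the desired strict bound.
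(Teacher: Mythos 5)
Your proof is correct and takes essentially the same route as the paper's: the identical splitting at $s=z$, the same change of variables reducing everything to two truncated integrals of $E_1$, and the same Fubini computation of the mass of $E_1$ (the paper bounds $\int_0^X E_1(x)\,\d x<1$ directly, while you compute $\int_0^\infty E_1(w)\,\d w=1$ and invoke positivity of $E_1$). Your extra step identifying the maximizer $z^\ast=Z/2$, which yields the sharper value $C_1(\kappa)=\int_0^{\kappa Z/2}E_1(w)\,\d w$, is a nice refinement -- it recovers exactly the constant the paper's appendix obtains via a rearrangement inequality -- but it is a polish of the same argument, not a different one.
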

\begin{proof}
\eeq{&&
\int_0^X E_1(x)\d x=\int_1^\infty\int_0^X \frac{\e^{-x t}}{t}\d x\d t = 
\int_1^\infty \frac{1-\e^{-X t}}{t^2}\d t < \int_1^\infty \frac{1}{t^2}\d t=1.
\cr&& \Rightarrow ~~
\kappa\int_0^Z E_1(\kappa|\tau-t|)\d t = \int_0^{\kappa Z} E_1(|s-\kappa\tau|)\d s=
\int_0^{\kappa\tau} E_1(\kappa\tau-s)\d s +
\int_{\kappa\tau}^{\kappa Z} E_1(s-\kappa\tau)\d s
\cr&&
=
 \int_0^{\kappa\tau} E_1(\theta)\d\theta + \int_0^{\kappa (Z-\tau)} E_1(\theta)\d\theta  < 2.
}
\end{proof}
\begin{theorem}\label{th:one} ~

$\{T^n\}_{n\geq 0}$ generated by Algorithm (\ref{algo}) converges to a solution of (\ref{st4}) and the convergence is monotone: $T^{n+1}(\vx)>T^n(\vx)$ for all $\vx$ and all $n$.
\end{theorem}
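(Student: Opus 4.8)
The plan is to argue by induction that the sequence is nonnegative and nondecreasing, to extract a uniform $L^\infty$ bound from the Lemma (whose content $C_1(\kappa)<1$ is exactly what keeps that bound finite), and then to invoke the monotone convergence theorem before passing to the limit in the two lines of (\ref{algo}) to recover (\ref{st4}). Throughout I treat the elliptic solve $-\bar\kappa_T\Delta T^{n+1}+(T^{n+1}_+)^4=(T^{n+\frac12})^4$ with $\p_n T^{n+1}|_{\p\Omega}=0$ as uniquely solvable and producing a nonnegative $T^{n+1}$, as already asserted from the monotonicity of $T\mapsto-\bar\kappa_T\Delta T+T_+^4$.

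For the monotonicity I would compare consecutive iterates. Note first that the kernel $\kappa E_1(\kappa|s-z|)$ is nonnegative and that $T_e>0$ on $\overline\Omega$, since $E_3>0$. The base case $T^0=0$ gives $(T^{\frac12})^4=T_e^4$, and the maximum (Hopf) principle applied to $-\bar\kappa_T\Delta T^1+(T^1_+)^4=T_e^4>0$ yields $T^1>0=T^0$. For the inductive step, assume $T^n>T^{n-1}\ge 0$. Raising to the fourth power and integrating against the nonnegative kernel gives $(T^{n+\frac12})^4\ge(T^{n-\frac12})^4$ pointwise, with strict inequality somewhere. Writing $W:=T^{n+1}-T^n$ and using that $t\mapsto t_+^4$ is nondecreasing, one has $(T^{n+1}_+)^4-(T^n_+)^4=c(\vx)W$ with $c(\vx)\ge0$, so
\eq{
-\bar\kappa_T\Delta W+c(\vx)W=(T^{n+\frac12})^4-(T^{n-\frac12})^4\ge 0,\qquad \p_n W|_{\p\Omega}=0.
}
The weak maximum principle gives $W\ge0$, and since the right-hand side is not identically zero the strong maximum principle upgrades this to $W>0$ on $\overline\Omega$, i.e.\ $T^{n+1}>T^n$; the induction closes.

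The uniform bound is where the Lemma enters. Taking $L^\infty$ norms in the first line of (\ref{algo}) and bounding the integral by $\|T^n\|_\infty^4$ times $\tfrac12\int_{z_m}^{z_M}\kappa E_1(\kappa|s-z|)\,\d s\le C_1(\kappa)$ gives $\|T^{n+\frac12}\|_\infty^4\le\|T_e\|_\infty^4+C_1(\kappa)\|T^n\|_\infty^4$; the maximum principle for the elliptic step then gives $\|T^{n+1}\|_\infty\le\|T^{n+\frac12}\|_\infty$. Setting $a_n:=\|T^n\|_\infty^4$ we obtain $a_{n+1}\le\|T_e\|_\infty^4+C_1(\kappa)\,a_n$, and because $C_1(\kappa)<1$ this recursion started from $a_0=0$ stays below $\|T_e\|_\infty^4/(1-C_1(\kappa))$. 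Hence $\{T^n\}$ is nondecreasing and uniformly bounded, so it converges pointwise (and, being dominated, in every $L^p$) to some $T\ge0$. The uniform $L^\infty$ bound on the right-hand sides yields a uniform $W^{2,p}$ bound on $T^{n+1}$ by elliptic regularity, hence strong $H^1$ convergence; passing to the limit — dominated convergence in the integral term, and $T_+=T$ since $T\ge0$ — shows that $T$ solves the stationary static case of (\ref{st4}).

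I expect the delicate point to be the passage to the limit in the nonlinear term, specifically justifying that the monotone pointwise limit is a genuine weak solution rather than merely a subsolution: this needs the elliptic regularity estimate to convert the $L^\infty$ bound into enough compactness to pass to the limit in $(T^{n+1}_+)^4$ and in $-\Delta T^{n+1}$ simultaneously. The monotonicity and the $L^\infty$ bound are comparatively routine once the Lemma is in hand; the strict inequality $C_1(\kappa)<1$ is precisely the contraction margin that both caps the geometric recursion for $a_n$ and, more quantitatively, controls the rate of the whole scheme.
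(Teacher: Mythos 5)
Your proof is correct and follows essentially the same route as the paper's: monotonicity by induction using the factorization of the difference of fourth powers as a nonnegative coefficient times $T^{n+1}-T^n$ plus the maximum principle, a uniform bound from the geometric recursion with ratio $C_1(\kappa)<1$, and monotone convergence to a solution of (\ref{st4}). You in fact supply details the paper leaves implicit, namely the base case, the strict inequality via the strong maximum principle, and the elliptic-regularity argument needed to pass to the limit in both the nonlinear term and the Laplacian.
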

\begin{proof}:~
From (\ref{algo})
\[
(T^{n+\tfrac12})^4 \leq |T_e^4|_\infty +  C_1(\kappa)|{T^n}|^4_\infty.
\]
By the maximum principle for the PDE in (\ref{algo}), $T^{n+1}\geq 0$ and $|T^{n+1}|_\infty \leq |T^{n+\tfrac12}|_\infty$ , therefore
\[
|T^{n+1}|^4\leq |T_e|^4_\infty +  C_1(\kappa)|{T^n}|^4_\infty.
\]
Hence $|T^{n+1}|_\infty$ is bounded. Assume that $T^n\ge T^{n-1}$.  The convergence is monotone because
\[
(T^{n+\tfrac12})^4-(T^{n-\tfrac12})^4 = \tfrac12\int_{z_m}^{z_M} \kappa E_1(\kappa|s-{z}|)\left[(T_s^{n})^4-(T_s^{n-1})^4\right] \ge 0,
\]
and as
\eeq{&&
-\bar\kappa_T\Delta(T^{n+1}-T^n) + b(T^{n+1}-T^n)=(T^{n+\tfrac12})^4-(T^{n-\tfrac12})^4
}
with $b=((T^{n+1})^2+(T^n)^2)(T^{n+1} +T^n)\geq 0$, the maximum principle implies that $T^{n+1}-T^n\ge 0$.

\end{proof}
\begin{remark}
Generalization of the above result to $(P^3)$ is straightforward because the maximum principle holds also for the temperature equation with convection. Consequently it seems doable to extend the above to the system (\ref{oneb}),(\ref{onec}).
When the density variations with the temperature are small the Boussineq approximation can be used in conjunction with (\ref{st4}):
\eq{
(P^4)\left\{~~~\begin{matrix}\ds
(4\pi\kappa B_0)^{-1}(\ds \p_t T+\vu\cdot\n T-\kappa_T\Delta T) + T^4
= T_e^4 + \tfrac12\int_{z_m}^{z_M} \kappa E_1(\kappa|t-{z}|)T_s^4\d s,
\cr
\p_t\vu+\vu\cdot\n\vu-\nu_F\Delta\vu + \n p=-b(T-T_0){\bf g},\quad \n\cdot\vu=0,
\end{matrix}\right.
}
with $\vu,T$ given at $t=0$ and $\vu$ or $\p_n\vu$ or $p\vn + \nu_T\p_\vn\vu$ and $\p_nT=0$ or $T$ given on $\p\Omega$. The kinematic viscosity  $\nu_F=\mu_F/\rho$ is taken constant; $b$ is a measure of $\p_T\rho$ and $T_0$ is the average temperature.  See \cite{ATT}, for instance, for the mathematical analysis of the Boussinesq-Stefan problem (similar to $(P^4)$ without the $T^4$ terms).
 \end{remark}

\section{Numerical tests}
\begin{table}[htp]
\caption{{\it The physical constants.}}
\begin{center}
\begin{tabular}{|c|c|c|c|}
\hline
$c$ & $\hbar$ & $k$& $B_0$\cr
\hline
$2.998\times 10^8 $& $6.6261\times 10^{-34}$ & $1.381\times 10^{-23}$ & $1.806657\times 10^{-19}$ \cr
\hline
\end{tabular}
\end{center}
\label{cte}
\end{table}%
Earth sees the sun as a black body at temperature ${\bar T_S}=5800$K radiating with an intensity $Q'=1370W/m^2$ of which 70\% reach the ground, giving at noon in Milano $Q=1370\times 0.7\cos\frac\pi 4= 678$. 

For water $\rho=1000kg/m^3$; light absorption is $\kappa=0.1$ for one meter and thermal diffusivity of water is $\kappa_T =1.5\times 10^{-7}m^2/s$ giving $\bar\kappa_T = 0.66\times 10^{11}$.

To avoid those large numbers we scale $T$ by $10^{-3}$.  Then ${\bar T_S}=5.8$, $(\frac Q 2)^\frac14 {\bar T_S}= 24.9$, $\tilde\kappa_T=10^{-9}\bar\kappa_T=66$.

\subsection{A one dimensional test}
If $\Omega=(0,10)$, we need to solve with Algorithm (\ref{algo}) the integro-differential equation in $z$:
\eeq{&&\label{oneD}
-66 T^{\prime\prime} + T^4 = 12.5 E_3(0.1|10-z|) + 0.05\int_0^{10} E_1(0.1|s-z|)T^4(s)\d s,\quad
\cr&&
T(0)=(12.5E_3(0))^\frac14,\; T^\prime(10)=0.
}
To solve $-66 T^{\prime\prime} + T^4 =f$, 3 iterations of a fixed point loop are used: 
 $-830 T^{\prime\prime m+1} + {T^{m}}^3T^{m+1} =f$.
 
The results are shown on Figure \ref{oneDlac}. The convergence is monotone as expected, even though Theorem \ref{th:one} hasn't been proved when a Dirichlet condition is applied to $T$ on part of $\p\Omega$. Notice that in absence of sunlight the temperature would be $T(0)$ everywhere.
 \begin{figure}[tbp]
\begin{minipage} [b]{0.4\textwidth} 
\begin{center}
\begin{tikzpicture}[scale=0.7]
\begin{axis}[legend style={at={(0,1)},anchor=north west}, compat=1.3,
  xlabel= {$z$},
  ylabel= {scaled $T^n$}]
\addplot[thick,solid,color=black,mark='+',mark size=1pt] table [x index=0, y index=1]{tempe.txt};
\addlegendentry{ $T^4_z$}
\addplot[thick,solid,color=blue,mark='+',mark size=1pt] table [x index=0, y index=2]{tempe.txt};
\addlegendentry{ $T^5_z$}
\addplot[thick,solid,color=red,mark='+',mark size=1pt] table [x index=0, y index=3]{tempe.txt};
\addlegendentry{ $T^6_z$}
\addplot[thick,solid,color=yellow,mark='+',mark size=1pt] table [x index=0, y index=4]{tempe.txt};
\addlegendentry{ $T^7_z$}
\addplot[thick,solid,color=green,mark='+',mark size=1pt] table [x index=0, y index=5]{tempe.txt};
\addlegendentry{ $T^8_z$}
\addplot[thick,solid,color=pink,mark='+',mark size=1pt] table [x index=0, y index=6]{tempe.txt};
\addlegendentry{ $T^9_z$}
\end{axis}
\end{tikzpicture}
\caption{
Convergence of $T^n$ solution of (\ref{oneD})
}
\label{oneDlac}
\end{center}
\end{minipage}
\begin{minipage} [b]{0.5\textwidth} 
\begin{center}
\includegraphics[width=8cm]{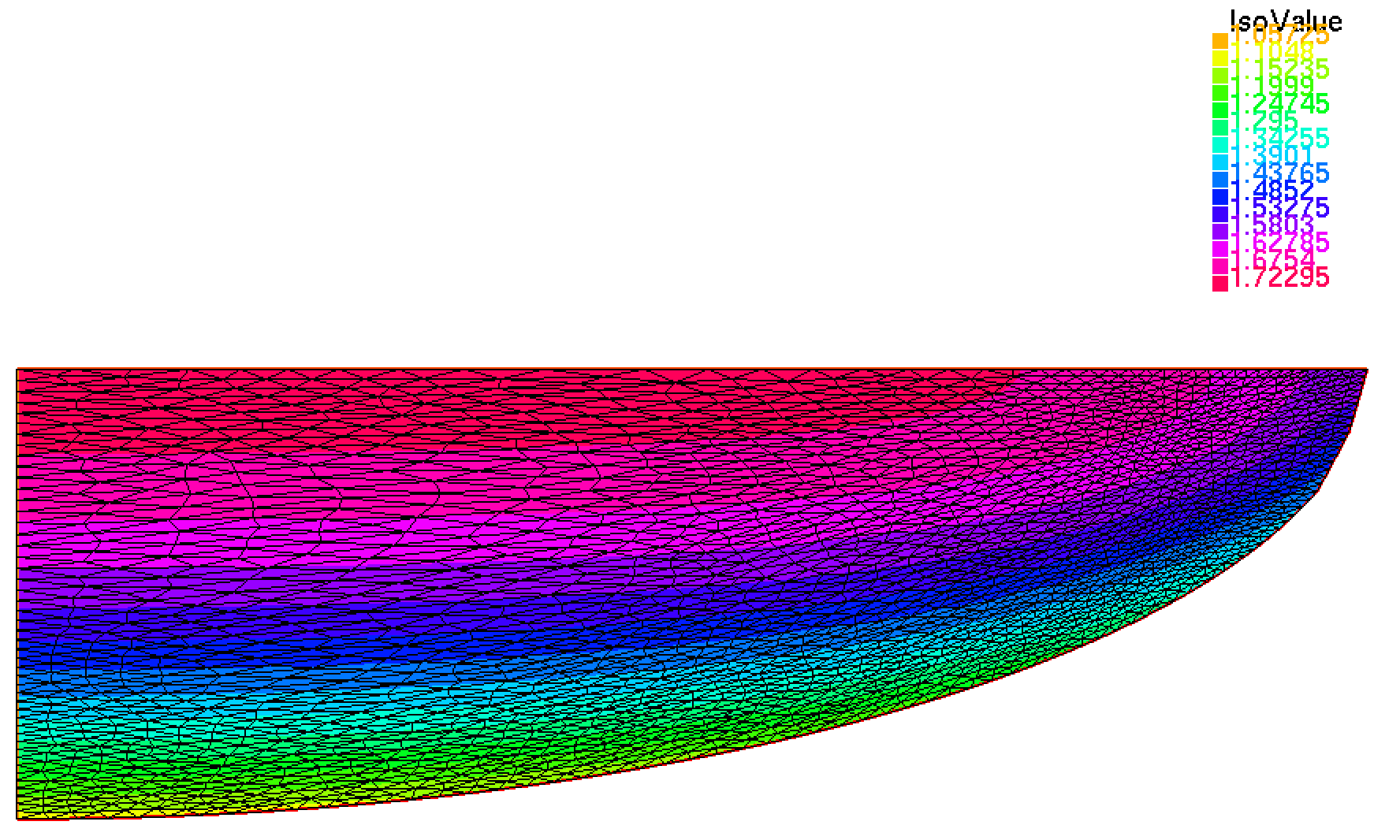}
\caption{Color map of $T(x,z)$ at iteration 10. The triangulation is also shown, adapted  from  $T$ computed at iteration 5.}
\label{twoDlac}
\end{center}
\end{minipage}
\end{figure}
\subsection{A two dimensional test for a lake}~

Now $\Omega$ is half of the vertical cross section of a symmetric lake. The lower right quarter side of the unit circle is stretched by $x,z\mapsto 30 x,10z$. The bottom boundary has an equation named $z=z_m(x)$.
The same problem is solve in 2D:
\eeq{\label{twoD}&&
-66 \Delta T + T^4 = 12.5 E_3(0.1|z_m(x)-z|) + 0.05\int_{z_m(x)}^{10} E_1(0.1|s-z|)T^4(s)\d s,\cr&&
T(x,z_m(x))=(12.5E_3(0.1 z_m(x))^\frac14,\; \p_z T(10)=0.
}
The same $3\times 10$ double iteration loop is used ;  the results are shown on Figure \ref{twoDlac}.

\subsection{A 3D case with convection in Lake Leman}
Lake Leman is discretized into 33810 tetrahedra. The surface  has 1287 triangles. The Finite Element method of degree 1 is used. This is too coarse for a Navier-Stokes simulation but appropriate for a potential flow. Pressure is imposed on the left and right tips  to simulates the debit of the Rh\^one.
The pressure $p$ solves $-\Delta p=0$ with $\p_np=0$ on the remaining boundaries; the velocity is $\vu=\n p$. The top plot in Figure \ref{leman} shows $p$ and $\vu$.

The full temperature equation of Problem ($P^4$) is solved with the same physical constant as above. The temperature is set at $T_e$ initially and on the bottom and side boundaries of the lake.
The time step is $t=0.1$; the method is fully implicit for the temperature. At each time step 3 iterations are needed to handle the $T^4$ term.  Figure \ref{leman} shows the temperature after 15 time steps; it appears to have reached a steady state.    The top right view of Figure \ref{leman} shows a region in red where the water at the surface is the hotest.
\begin{figure}[htbp]
\begin{center}
\includegraphics[width=6cm]{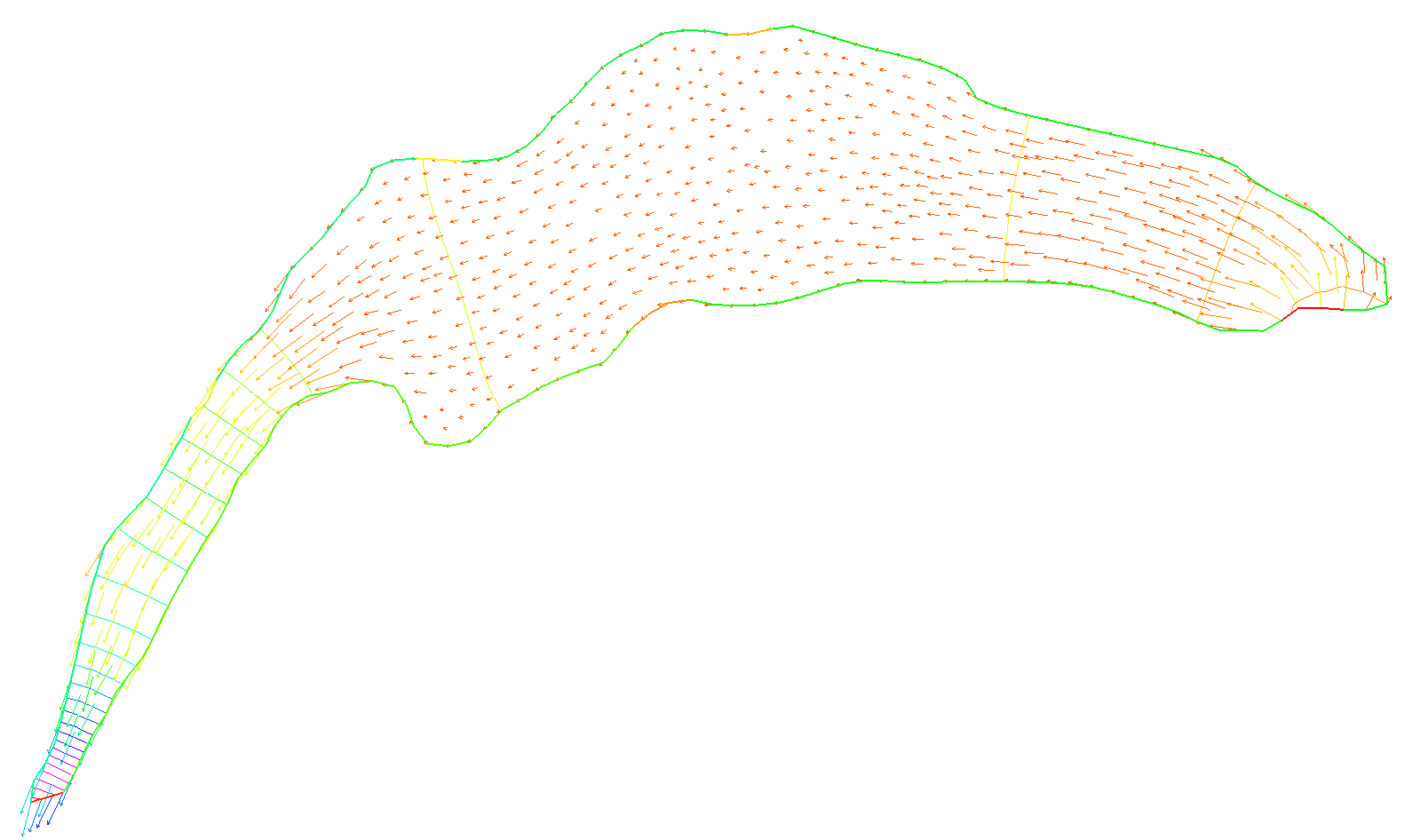}
\includegraphics[width=6cm]{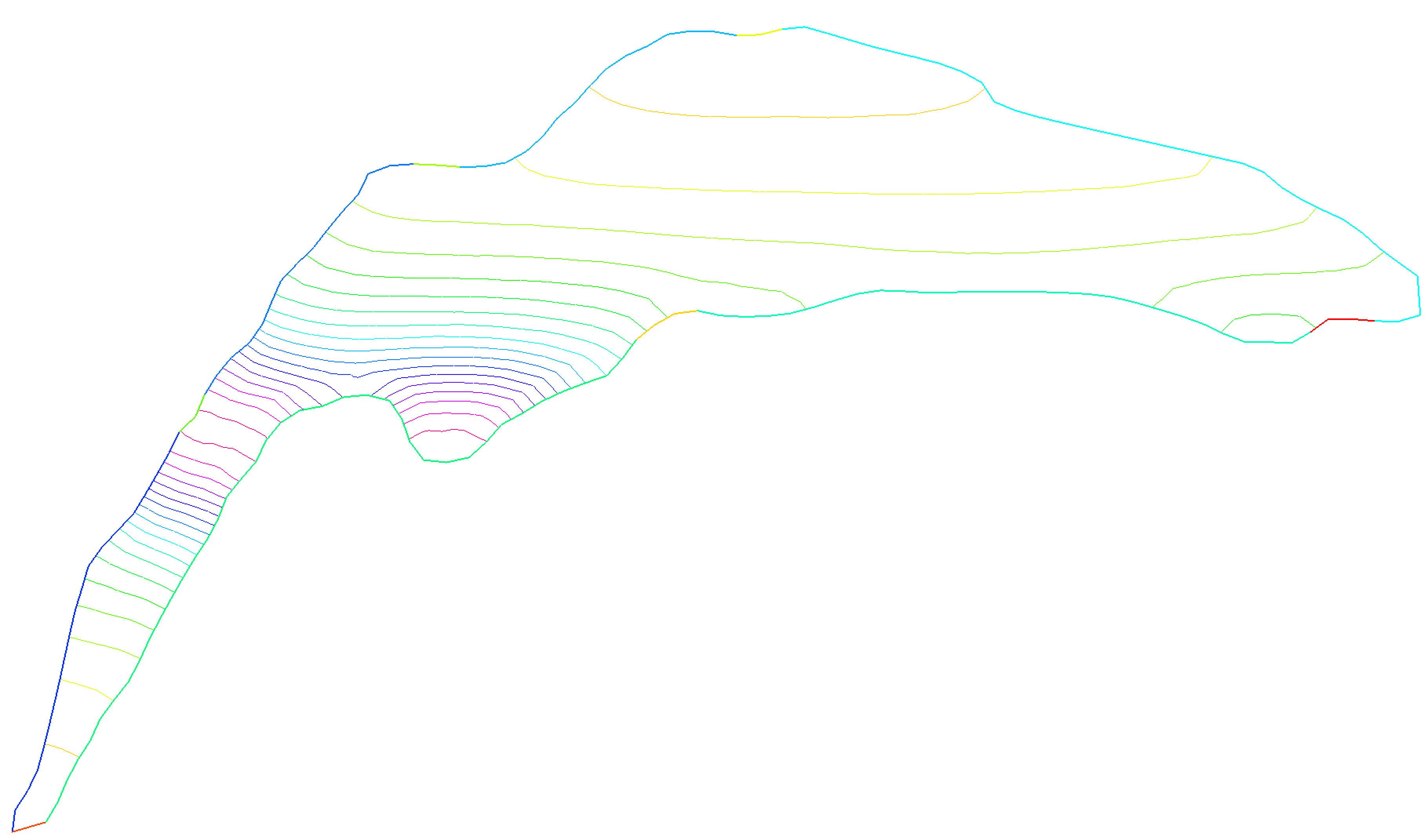}
\includegraphics[width=7cm]{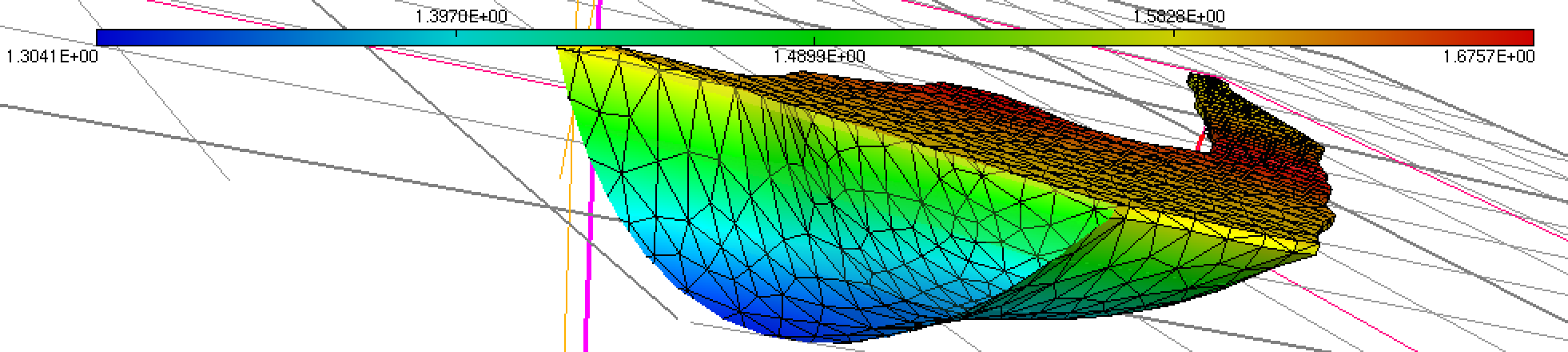}
\includegraphics[width=7cm]{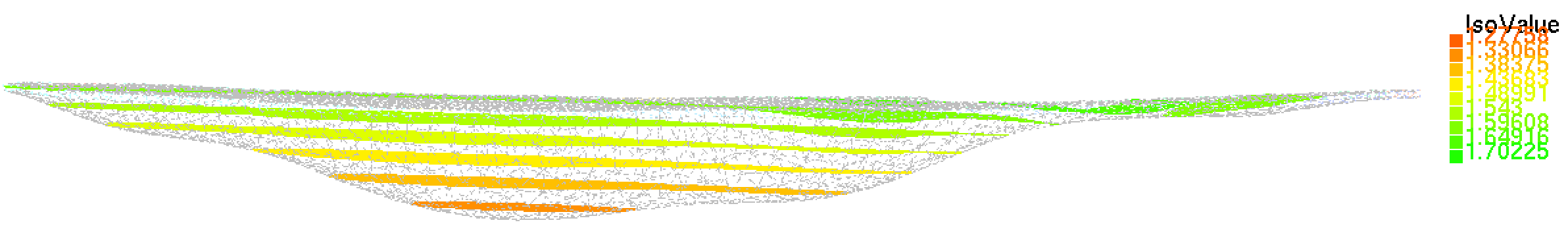}
\caption{Top left: velocity vectors and pressure isolines at the surface of the lake. 
Top right: iso lines of the surface temperature.  Bottom left: perspective view of a 3D color map of the temperature on the side of the lake past a middle vertical plane. Bottom right: perspective view showing some temperature level surfaces inside the lake. }
\label{leman}
\end{center}
\end{figure}

This computation is merely a feasibility study to prove that the implementation of the RT module in a standard CFD code is easy and fast.  Computing time on an intel core i9 takes less than a minute.

\subsection{Comments on the programming tools}~

In fifty years the research problems have become increasingly complex and without the joint development of computers and programming tools it would not be possible for a single individual to contribute or even test his ideas.  The second author is part of the team which developed the PDE solver \texttt{FreeFem++}\cite{FF} 

(see \texttt{https://fr.wikipedia.org/wiki/FreeFem\%2B\%2B}).

The algorithms discussed here have been implemented with this tool in a very short time. The discretization of lake Leman is part of the examples in \cite{FF}, written by F. Hecht.

\section{The general case, $\kappa_\nu,a_\nu$ non constant}
Photons interact with the atomic structure of the medium which implies that $\kappa_\nu$ depends strongly on $\nu$ but also on the temperature and  pressure. For the earth atmosphere the pressure and the temperature are approximately decaying exponentially with altitude.  

Assume that variations with altitude are known: $\rho\bar\kappa_\nu=\varphi(z)\kappa_\nu$ with $z=z_m=0$ on the ground.  Let $\tau=\int_0^z\varphi(s)\d s$; for instance  $\tau=1-\e^{-z}$ when $\varphi(z)=\e^{-z}$. Now (\ref{oneamu}),(\ref{onebmu}) hold with $0<\tau<Z:=1-\e^{-z_M}$ instead of $z_m<z<z_M)$.

Consider two types of scattering kernels: a Rayleigh scattering kernel $p^r(\mu,\mu')=\frac3{8}[3-\mu^2+3(\mu^2-1)\mu'^2]$ and an isotropic scattering kernel  $p=1$. Let $a^r_\nu$ and $a^i_\nu:=a_\nu-a^r_\nu$ be the scattering coefficients for both. The problem is
 \eeq{&& \label{oneamunu}
\mu\p_\tau I_\nu + \kappa_\nu I_\nu 
= \kappa_\nu(1-a_\nu) B_\nu(T)+\tfrac12\kappa_\nu \int_{-1}^1( a^r_\nu p^r +a_\nu^i) I_\nu\d\mu'
\cr&&
I(0,\mu)|_{\mu>0}= \alpha I(0,-\mu) +  Q_\nu^+(\mu),\quad I(Z,\mu)|_{\mu<0}=Q_\nu^-(\mu)
}
The boundary condition at $\tau=0$ is a simplified Lambert condition which says that a portion $\alpha$ of the incoming light is reflected back (Earth albedo) and adds to the prescribed upgoing light $Q_\nu^+$.  Sun light is prescribed at high altitude, $Z$, to be $Q^-(\mu)$.

Let 
\[
J_\nu(\tau)=\tfrac12\int_{-1}^1I_\nu(\tau,\mu)\d\mu,\quad K_\nu(\tau)=\tfrac12\int_{-1}^1\mu^2 I_\nu(\tau,\mu)\d\mu.
\] 
An integral formulation can be derived from (\ref{oneamunu}) as in \cite{CHA}, section 11.2:
\eeq{&& \label{basic}
(\mu\p_\tau + \kappa_\nu) I_\nu 
= H_\nu(\tau,\mu)
\cr&&
:=\kappa_\nu\left((1-a_\nu) B_\nu(T_\tau)+[a^i_\nu+\tfrac38a_\nu^r(3-\mu^2) ]J_\nu(\tau) + \tfrac98a_\nu^r(\mu^2-1) K_\nu(\tau)\right) ~~~~~~
\\ \label{basic2}&&\Rightarrow~~
I(\tau,\mu) = \,{\bf 1_{\mu>0}}\left[R^+_\nu(\mu) {\rm e}^{-\kappa_\nu\frac{\tau}\mu}+\int_0^\tau \frac{{\rm e}^{\kappa_\nu\frac{t-\tau}\mu}}\mu \kappa_\nu H_\nu({t,\mu})\d t\right]
\cr &&
\hskip1cm +\,{\bf 1_{\mu<0}}\left[Q^-_\nu(\mu) {\rm e}^{\kappa_\nu\frac{Z-\tau}\mu} -\int_\tau^Z \frac{{\rm e}^{\kappa_\nu\frac{t-\tau}\mu}}\mu\kappa_\nu H_\nu(t,\mu)\d t\right],
}
where $R^+(\mu)= Q_\nu^+(\mu) + \alpha I(0,-\mu)$, i.e.
\eeq{&&\label{albedo}
R^+_\nu(\mu)|_{\mu>0}=Q_\nu^+(\mu) +\alpha \left[Q^-_\nu(-\mu) {\rm e}^{-\kappa_\nu\frac{Z}\mu} +\int_0^Z \frac{{\rm e}^{-\kappa_\nu\frac{t}\mu}}\mu\kappa_\nu H_\nu(t,-\mu)\d t\right].
}
From (\ref{basic2}), since $H_\nu=H_\nu^0+\mu^2 H_\nu^2$, with $H_\nu^0,H_\nu^2$ independent of $\mu$, linear functions of $J_\nu$ and $K_\nu$:
{\small\eeq{\label{25}&&
H_\nu^0(\tau)=\kappa_\nu(1-a_\nu) B_\nu(T)+\kappa_\nu\left((a^i_\nu + \frac{9a_\nu^r}8)J_\nu - \frac{9a_\nu^r}8 K_\nu\right),
\quad \cr&&
H_\nu^2(\tau) = -\kappa_\nu\frac{3a_\nu^r}8[ J_\nu -3 K_\nu].
\\ \label{23}&
%
 &J_\nu(\tau)=\tfrac12\int_0^1\left(e^{-\kappa_\nu\frac{\tau}{\mu}}Q^+_\nu(\mu) 
+\left[e^{-\kappa_\nu\frac{(Z-\tau)}{\mu}}+\alpha e^{-\kappa_\nu\frac{(Z+\tau)}{\mu}}\right]Q^-_\nu(-\mu)\right)d\mu
\cr&&
+ \tfrac12\int_0^Z\left( [E_1(\kappa_\nu|\tau-t|)+\alpha E_1(\kappa_\nu(\tau+t))] H_\nu^0(\tau)
+ [E_3(\kappa_\nu|\tau-t|)+\alpha E_3(\kappa_\nu(\tau+t))] H_\nu^2(\tau)\right)\d t
\cr&&\\ \label{23b}&
&K_\nu(\tau) =\tfrac12\int_0^1\mu^2\left(e^{-\kappa_\nu\frac{\tau}{\mu}}Q^+_\nu(\mu)
+\left[e^{-\kappa_\nu\frac{(Z-\tau)}{\mu}}+\alpha e^{-\kappa_\nu\frac{(Z+\tau)}{\mu}}\right]Q^-_\nu(-\mu)\right)d\mu
\cr&&\cr&&
+\tfrac12  \int_0^Z\left( [E_3(\kappa_\nu|\tau-t|)+\alpha E_3(\kappa_\nu(\tau+t))] H_\nu^0(\tau)
+ [E_5(\kappa_\nu|\tau-t|)+\alpha E_5(\kappa_\nu(\tau+t))] H_\nu^2(\tau)\right)\d t,
\cr&&
}}
The system is coupled to
\eeq{&& \label{26}
\p_tT+\vu\cdot\n T -\kappa_T\Delta_{x,y,z} T + 4\pi \int_0^\infty \kappa_\nu(1-a_\nu) B_\nu(T_\tau)\d\nu=  4\pi \int_0^\infty \kappa_\nu(1-a_\nu)\ J_\nu(\tau)\d\nu ,
\cr&&
}
\subsection{Iterative method for the general case}
In the spirit of (\ref{algo}), consider 
\subsection{Algorithm 2}\label{algo2}
\begin{enumerate}
\item Starting from $T^0=0$, $J^0_\nu=0$,  $K^0_\nu=0$.
\item Compute $J_\nu^{n+1}(\tau),K_\nu^{n+1}(\tau)$ by  (\ref{23})(\ref{23b}) with  $T^n$, $J^n$,$K^n$. in place of $T,J,K$.
\item Compute $T^{n+1}$ by solving (\ref{26}) with $J^{n+1}_\nu(\tau)$ in the r.h.s.
\end{enumerate}
Note that for isotropic scattering $K_\nu$ is not needed.  Then the following convergence results hold when thermal diffusion is neglected.
\begin{theorem}\label{th:2}
Assume  $\alpha=0$, $\vu=0$, $\kappa_T=0$, $\p_tT=0$. Assume $\kappa_\nu$ strictly positive and uniformly bounded, and $0\le a_\nu<1$ for all $\nu>0$. Let $Q^\pm_\nu\ge 0$ satisfy, for some $T_M$ and some $Q$
\eeq{&&\label{condQ}
0\le Q^\pm_\nu(\mu)\le Q B_\nu(T_M) \quad \forall\mu,\nu\in(-1,1)\times\R^+.
}
Then Algorithm \ref{algo2} defines a sequence of radiative intensities $I^n_\nu$ and temperatures $T^n$ converging pointwise to $I_\nu$ and $T$ respectively, which is a solution of (\ref{oneamunu}),(\ref{26}) and the convergence is uniformely increasing.
\end{theorem}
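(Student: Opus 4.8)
The plan is to recognise Algorithm \ref{algo2} as a monotone iteration on the pair $(I^n_\nu,T^n)$ and to run the classical sub-/super-solution scheme: positivity, monotonicity, a uniform upper bound, and passage to the limit. First I would use the hypotheses $\alpha=0$, $\vu=0$, $\kappa_T=0$, $\p_tT=0$ to collapse the temperature equation (\ref{26}) into the pointwise-in-$\tau$ algebraic balance
\[
\int_0^\infty \kappa_\nu(1-a_\nu)B_\nu(T_\tau)\,\d\nu = \int_0^\infty \kappa_\nu(1-a_\nu)J_\nu(\tau)\,\d\nu .
\]
Since $T\mapsto B_\nu(T)$ is continuous and strictly increasing with $B_\nu(0)=0$, the left-hand side is a strictly increasing bijection of $[0,\infty)$, so this relation defines $T_\tau$ uniquely and monotonically from $\{J_\nu(\tau)\}_\nu$: larger $J_\nu$ yields larger $T_\tau$. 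Thus Step 3 is the monotone map $J^{n+1}\mapsto T^{n+1}$, while Step 2 is the explicit affine update (\ref{23})--(\ref{23b}) (explicit, not implicit, since it feeds on the old iterate). I would treat the whole step as a map $\Phi:(I^n_\nu,T^n)\mapsto(I^{n+1}_\nu,T^{n+1})$, where $I^{n+1}_\nu$ is reconstructed from the source $H^{(n)}_\nu$ built out of $(J^n,K^n,T^n)$ via (\ref{25}), (\ref{basic2}).

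The heart of the argument, and the step I expect to be the main obstacle, is monotonicity, because the Rayleigh term enters $H^0_\nu$ in (\ref{25}) with a \emph{negative} coefficient on $K_\nu$, so $\Phi$ is not obviously order preserving. The resolution is the moment inequality $0\le K_\nu\le J_\nu$, valid whenever $I_\nu\ge0$ since $J_\nu-K_\nu=\tfrac12\int_{-1}^1(1-\mu^2)I_\nu\,\d\mu$. Writing $H_\nu(\tau,\mu)=H^0_\nu+\mu^2H^2_\nu$ and collecting the scattering part gives, with $s:=\mu^2\in[0,1]$, the quantity $\kappa_\nu([a^i_\nu+\tfrac38a^r_\nu(3-s)]J_\nu+\tfrac98a^r_\nu(s-1)K_\nu)$; bounding $K_\nu$ above by $J_\nu$ in the nonpositive $K$-term shows this is $\ge\kappa_\nu J_\nu[a^i_\nu+\tfrac34a^r_\nu s]\ge0$. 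Hence $I_\nu\ge0\Rightarrow H_\nu\ge0\Rightarrow I^{\mathrm{new}}_\nu\ge0$ through (\ref{basic2}) with its positive exponential kernels and $Q^\pm_\nu\ge0$; starting from $I^0=0$, where $H^{(0)}_\nu=\kappa_\nu(1-a_\nu)B_\nu(0)=0$, this already gives $I^1_\nu\ge0=I^0_\nu$. The same linear algebra applied to increments --- using $B_\nu(T^n)-B_\nu(T^{n-1})\ge0$ (from $T^n\ge T^{n-1}$) and $0\le\delta K_\nu:=K^n_\nu-K^{n-1}_\nu\le\delta J_\nu$ --- yields $H^{(n)}_\nu-H^{(n-1)}_\nu\ge0$, so $I^{n+1}_\nu-I^n_\nu$ solves the transport equation (\ref{oneamunu}) with nonnegative source and vanishing boundary increment, hence is $\ge0$ by the maximum principle. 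By induction $I^{n+1}_\nu\ge I^n_\nu\ge0$, so $J^n_\nu,K^n_\nu$ and, via the balance above, $T^n$ are all nondecreasing: the convergence is uniformly increasing.

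Next I would exhibit a constant Planck field as super-solution to get a uniform bound. For a trial temperature $\bar T$ set $\bar I_\nu\equiv B_\nu(\bar T)$; then $\bar J_\nu=B_\nu(\bar T)$, $\bar K_\nu=\tfrac13B_\nu(\bar T)$, and substituting into (\ref{25}) gives $H^2_\nu=0$ and $H^0_\nu=\kappa_\nu B_\nu(\bar T)(1-\tfrac14a^r_\nu)$, using $1-a_\nu+a^i_\nu+\tfrac34a^r_\nu=1-\tfrac14a^r_\nu$. Applying one step, $w:=\Phi_I(\bar I)_\nu-B_\nu(\bar T)$ solves $\mu\p_\tau w+\kappa_\nu w=-\tfrac14\kappa_\nu a^r_\nu B_\nu(\bar T)\le0$ with boundary data $Q^\pm_\nu-B_\nu(\bar T)$. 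Choosing $\bar T$ with $B_\nu(\bar T)\ge Q\,B_\nu(T_M)$ for all $\nu$ --- possible because $\inf_\nu B_\nu(\bar T)/B_\nu(T_M)=\bar T/T_M$ is attained in the Rayleigh--Jeans limit $\nu\to0$, so $\bar T\ge\max(1,Q)\,T_M$ suffices --- makes this data $\le0$ by (\ref{condQ}), and the maximum principle gives $w\le0$, i.e. $\Phi_I(\bar I)_\nu\le B_\nu(\bar T)$; the temperature balance then forces $\Phi_T(\bar T)\le\bar T$. Thus $(\bar T,\bar I)$ is a super-solution and, by the monotonicity of $\Phi$, $0\le I^n_\nu\le B_\nu(\bar T)$ and $0\le T^n\le\bar T$ for all $n$.

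Finally, being monotone and bounded, $I^n_\nu\uparrow I_\nu$, $T^n\uparrow T$, $J^n_\nu\uparrow J_\nu$, $K^n_\nu\uparrow K_\nu$ pointwise. Since $\kappa_\nu$ is bounded and $\int_0^\infty\kappa_\nu(1-a_\nu)B_\nu(\bar T)\,\d\nu<\infty$ (Stefan--Boltzmann-type integrability supplied by the dominating Planck profile), monotone/dominated convergence lets me pass to the limit inside the $\mu$- and $\nu$-integrals of (\ref{23})--(\ref{23b}), (\ref{basic2}) and of the temperature balance, so $(I_\nu,T)$ solves (\ref{oneamunu}), (\ref{26}). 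Beyond the sign issue in the monotonicity step, the only delicate point is the $\nu$-uniform choice of $\bar T$, which is exactly where hypothesis (\ref{condQ}) and the two-sided control of $B_\nu(\bar T)/B_\nu(T_M)$ enter.
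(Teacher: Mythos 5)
Your proposal is correct, but it is not the route the paper takes for Theorem \ref{th:2}. The paper's own proof deliberately retreats to the case $a_\nu=0$ (no scattering at all, deferring the general case to a forthcoming reference): it bounds $\int_0^\infty\kappa_\nu B_\nu(T^n_\tau)\,d\nu$ by iterating the inequality
\[
\int_0^\infty\kappa_\nu B_\nu(T^{n+1}_\tau)\,d\nu\le C_2+C_1(\kappa_M)\sup_{t\in(0,Z)}\int_0^\infty\kappa_\nu B_\nu(T^n_t)\,d\nu,
\]
where $C_1(\kappa_M)<1$ is exactly the constant of the paper's Lemma, so boundedness follows from a geometric-series (contraction-type) argument in the single scalar quantity $\sup_\tau\int\kappa_\nu B_\nu(T^n_\tau)\,d\nu$; monotonicity and Beppo Levi then finish the proof, and hypothesis (\ref{condQ}) is used only to make the source $S(\tau)$ bounded. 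You instead keep the full isotropic-plus-Rayleigh scattering of (\ref{25})--(\ref{26}) and replace the contraction bound by a barrier: the constant Planck field $\bar I_\nu=B_\nu(\bar T)$ with $\bar T\ge\max(1,Q)T_M$, whose admissibility rests on your (correct) observation that $\inf_\nu B_\nu(\bar T)/B_\nu(T_M)=\bar T/T_M$, i.e.\ on the monotonicity of $t\mapsto(e^{tx}-1)/t$. Your resolution of the genuine difficulty in the Rayleigh term --- the negative coefficient of $K_\nu$ in $H^0_\nu$ --- via the moment inequalities $0\le\delta K_\nu\le\delta J_\nu$, giving $\delta H_\nu\ge[a^i_\nu+\tfrac34 a^r_\nu\mu^2]\,\delta J_\nu\ge0$, is sound, and it is essentially the mechanism used in the paper's appendix (Theorem \ref{T-CvgR}, which handles Rayleigh scattering by showing $I^n_\nu\le B_\nu(T_M)$, $T^n\le T_M$ under $Q^\pm_\nu\le B_\nu(T_M)$); your contribution beyond that appendix argument is absorbing the constant $Q$ of (\ref{condQ}) into the choice of $\bar T$. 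The trade-off between the two routes is worth noting: the paper's contraction approach yields an exponential convergence rate (made explicit in appendix Theorem \ref{T-Conv}) but, as written in the main text, proves strictly less than the theorem states; your barrier approach proves the stated general case and gives uniform pointwise bounds $I^n_\nu\le B_\nu(\bar T)$, $T^n\le\bar T$ (a maximum principle for free), but provides no rate of convergence.
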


\begin{remarks} 
~

\begin{enumerate}
\item Starting with $T^0=0$ is a sure way to initialise the recurrence and have $T^1>T^0$.
\item Most likely, monotone convergence holds also in the general case $\alpha>0$, $\vu$, $\kappa_T$ and $\p_tT$ non-zero because,just like $T\mapsto T^4$, the function $T\mapsto \int_0^\infty\kappa_\nu(1-a_\nu) B_\nu(T)\d\nu $ is monotone increasing (its derivative is strictly positive).
\item
In the special case $a_\nu^r=0$, and $Q_\nu^\pm(\mu)=|\mu|Q_\nu^\pm$ the problem is
\begin{equation}\label{RTSlabDisc}
\begin{aligned}
{}&(\mu\partial_\tau +\kappa_\nu)I_\nu(\tau,\mu)= \kappa_\nu a_\nu J_\nu(\tau)+\kappa_\nu(1-a_\nu)B_\nu(T_\tau),\qquad J_\nu(\tau) = \tfrac12 \int_{-1}^1 I_\nu(\tau,\mu)\d\mu,
\\
&I_\nu(0,\mu)=Q^+_\nu\mu\,,\quad I_\nu(Z,-\mu)=Q^-_\nu\mu\,,\qquad 0<\mu<1\,,
\\ &
\int_0^\infty\kappa_\nu(1-a_\nu) B_\nu(T_\tau)\d\nu = \int_0^\infty \kappa_\nu (1-a_\nu) J_\nu(\tau)\d \nu.
\end{aligned}
\end{equation}
The iterative process is then
to start with $T^0=0$, and compute $T^{n+1}$ from $T^n$ by
\eeq{& \label{g1}
J_\nu^{n+1}(\tau)  &= \tfrac12 Q^+_\nu E_3(\kappa_\nu\tau)+\tfrac12 Q^-_\nu E_3(\kappa_\nu(Z-\tau))
\cr&&
+ \kappa_\nu\int_0^Z E_1(\kappa_\nu|\tau-t|)\left(a_\nu J^n_\nu(t)+(1-a_\nu)B_\nu(T^n_t)\right)\d t,
\\ &&\label{g2}
\int_0^\infty\kappa_\nu(1-a_\nu) B_\nu(T^{n+1}_\tau)\d\nu = \int_0^\infty \kappa_\nu (1-a_\nu) J^{n+1}_\nu(\tau)\d \nu.
}
\item Note that $T\mapsto \int_0^\infty\kappa_\nu(1-a_\nu) B_\nu(T)\d\nu $ is  continuous,  
strictly increasing, hence invertible. Thus (\ref{g2}) defines $T^{n+1}_\tau$ uniquely.
 \item One may recover the light intensity by
\begin{equation}\label{IntForm}
\begin{aligned}
I^{n+1}_\nu(\tau,\mu)=&e^{-\kappa_\nu\frac{\tau}{\mu}}Q^+_\nu(\mu)\mathbf 1_{\mu>0}+e^{-\kappa_\nu\frac{(Z-\tau)}{|\mu|}}Q^-_\nu(\mu)\mathbf 1_{\mu<0}
\\
&+\mathbf 1_{\mu>0}\int_0^\tau e^{-\kappa_\nu\frac{(\tau-t)}{\mu}}\tfrac{\kappa_\nu}{\mu}(a_\nu J^n_\nu(t)+(1-a_\nu)B_\nu(T^n_t))dt
\\
&+\mathbf 1_{\mu<0}\int_\tau^Ze^{-\kappa_\nu\frac{(t-\tau)}{\mu}}\tfrac{\kappa_\nu}{\mu}(a_\nu J^n_\nu(t)+(1-a_\nu)B_\nu(T^n_t))dt\,.
\end{aligned}
\end{equation}
but numerically these are singular integrals while (\ref{g1}),(\ref{g2}) are not.  Indeed $\e^{-\frac{x}{\mu}}/\mu$ tends to infinity when $x$ and $\mu$ tend to $0$.
\item Theorem \ref{th:2} extends a result given in \cite{OP} which had unnecessary restrictions on $\kappa_\nu$.
\end{enumerate}
\end{remarks}
\begin{proof}~

The complete proof will appear in \cite{CBOP}. Here, for simplicity, we consider the case $a_\nu=0$.
Let $\ds S(\tau):=\int_0^\infty\frac{\kappa_\nu}2\int_0^1\left(e^{-\kappa_\nu\frac{\tau}{\mu}}Q^+_\nu(\mu)
+e^{-\kappa_\nu\frac{Z-\tau}{\mu}}Q^-_\nu(-\mu)\right)\d\mu\d\nu$.
 By (\ref{g1})
\eeqn{&&  \label{gntn2}
\ds\int_0^\infty \kappa_\nu B_\nu({T^{n+1}_\tau}) \d\nu =
\ds\int_0^\infty \kappa_\nu J^{n+1}_\nu(\tau)\d\nu  =  S(\tau)+
 \tfrac12\int_0^\infty \int_0^Z \kappa_\nu^2 E_1(\kappa_\nu|\tau-t|)B_\nu({T^n_t})  \d t\d\nu
 \cr &&
 \leq  S(\tau)+\tfrac12\max_\kappa\int_0^Z\kappa E_1(\kappa|\tau-t|)\d t\sup_{t\in(0,Z)}\int_0^\infty \kappa_\nu B_\nu({T^n_t}) \d\nu
 \cr&&
 \leq C_2 + C_1(\kappa_M)\sup_{t\in(0,Z)}\int_0^\infty \kappa_\nu B_\nu({T^n_t}) \d\nu,
}
with $C_2=\sup_{t\in(0,Z)}S(t)$ and $\kappa_M=\sup_\nu\kappa_\nu$, because $\kappa\mapsto C_1(\kappa)$ is monotone increasing. 
As $C_1(\kappa_M)<1$ it implies that $B_\nu^n(\tau):=B_\nu(T^n(\tau))$ is bounded for all $\tau$.

Now assume that $T^n_\tau>T^{n-1}_\tau$ for all $\tau>0$. Then $T\mapsto B_\nu(T)$   being increasing, $B_\nu(T^{n}_\tau)>B_\nu(T^{n-1}_\tau),~\forall\tau,\nu,$ and so for all $\tau$:
\eeq{&&\ds
\int_0^\infty\kappa_\nu\left(B_\nu(T^{n+1}_\tau)-B_\nu(T^{n}_\tau)\right)\d\nu
= \int_0^\infty\kappa_\nu\left(J_\nu^{n+1}(\tau)-J_\nu^{n}(\tau)\right)\d\nu
\cr &&
=\int_0^\infty\frac{\kappa_\nu^2}2 \int_0^Z E_1(\kappa_\nu|\tau-t|)\left(B_\nu(T^{n}_t)-B_\nu(T^{n-1}_t)\right)\d t\d\nu
>0.
\cr&&
}
As $T\mapsto B_\nu(T)$ is continuous, it implies that $T^{n+1}_\tau> T^n_\tau,~\forall\tau$. Hence for some $T^*(\tau)$, possibly $+\infty$, $T^n\to T^*$. 
By continuity $B_\nu(T^n_t)\to B_\nu(T^*_t)$, but it has been show above that $B_\nu(T^n_t)=B^n_\nu\to B^*_\nu$, so $B_\nu(T^*_t)$ is finite and so is $T^*_t$.
Recall that a bounded  increasing sequence converges, so $B_\nu(T^n_t)\to B_\nu(T^*_t)$ for all $t$ and $\nu$ and the convergence of $E_1(\kappa_\nu|\tau-t|)B_\nu(T^n_t)\to E_1(\kappa_\nu|\tau-t|)B_\nu(T^*_t)$ being monotone, the integral converges to the integral of the limit (Beppo Levi's lemma).  This shows that $T^*_\tau$ is the solution of the problem. 
\end{proof}

\section{Uniqueness, Maximum Principle}
This section follows computations in \cite{G87} (in the case $Z=+\infty$ and with $a_\nu=0$) and in \cite{MER}.

\begin{theorem}\label{T-Uniq}
Assume  $0<\kappa_\nu\le\kappa_M$,  $0\le a_\nu<1$ for all $\nu>0$. Let $Q^\pm,R^\pm\in L^1((0,1)\times\R^+)$ satisfy 
$$
0\le Q^\pm_\nu(\mu)\le R^\pm_\nu(\mu)\quad\text{ for a.e. }(\mu,\nu)\in(0,1)\times(0,\infty)\,.
$$

Then, the solutions $(I_\nu,T)$  and $(I'_\nu,T')$ of \eqref{RTSlabDisc} with $Q^\pm_\nu(\mu)$ and $R^\pm_\nu(\mu)$ respectively, satisfy
$$
I_\nu(\tau,\mu)\le I'_\nu(\tau,\mu)\text{ and }T_\tau\le T'_\tau\quad\text{ for a.e. }(\tau,\mu)\in(-1,1)\times(0,\infty)\,.
$$
In particular, 
$Q^\pm_\nu(\mu)=R^\pm_\nu(\mu)\text{ for a.e. }(\mu,\nu)\in(0,1)\times(0,\infty)$
implies 
$$
I_\nu(\tau,\mu)=I'_\nu(\tau,\mu)\text{ and }T_\tau=T'_\tau\quad\text{ for a.e. }(\tau,\mu)\in(-1,1)\times(0,\infty).
$$
\end{theorem}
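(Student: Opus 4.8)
I would reduce the coupled system to a single order-preserving fixed-point equation for the mean intensities $(J_\nu)$ and then compare the two solutions through monotonicity rather than through a contraction. First, apply the method of characteristics to the transport equation in \eqref{RTSlabDisc}, exactly as in the passage leading to \eqref{g1} and \eqref{IntForm}, to express $I_\nu$ — and hence $J_\nu$ — in terms of the effective source $S_\nu(\tau):=a_\nu J_\nu(\tau)+(1-a_\nu)B_\nu(T_\tau)$ and the boundary data. The data enter only through a nonnegative term $\Sigma_\nu[Q^\pm](\tau)$ built from the kernels $\e^{-\kappa_\nu\tau/\mu}$ and $\e^{-\kappa_\nu(Z-\tau)/\mu}$, which is monotone in the sense that $Q^\pm_\nu(\mu)\le R^\pm_\nu(\mu)$ pointwise gives $\Sigma_\nu[Q^\pm]\le\Sigma_\nu[R^\pm]$. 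The coupling (last line of \eqref{RTSlabDisc}) reads $\Phi(T_\tau)=\int_0^\infty\kappa_\nu(1-a_\nu)J_\nu(\tau)\d\nu$ with $\Phi(T):=\int_0^\infty\kappa_\nu(1-a_\nu)B_\nu(T)\d\nu$ continuous and strictly increasing (Remarks, item 4), so $T_\tau=\Phi^{-1}(\cdot)$ and $B_\nu(T_\tau)$ becomes an increasing function of the whole family $(J_{\nu'}(\tau))_{\nu'}$.

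\textbf{Comparison.} These facts make the right-hand side of \eqref{g1} a fixed-point operator $\mathcal F_Q$ on nonnegative families $(J_\nu)$ that is order-preserving — all kernels $E_1\ge 0$, and both $J\mapsto a_\nu J_\nu$ and the coupling-composed map $J\mapsto B_\nu(T)$ are increasing — and monotone in the data, i.e. $Q^\pm\le R^\pm$ implies $\mathcal F_Q\le\mathcal F_R$ on equal arguments. Starting both iterations from $J^0\equiv 0$ as in Theorem \ref{th:2}/Algorithm 2, an induction gives $J^n_\nu\le J'^{\,n}_\nu$ for all $n$: if $J^n\le J'^{\,n}$ then $B_\nu(T^n)\le B_\nu(T'^{\,n})$ by monotonicity of $\Phi^{-1}$ and $B_\nu$, and applying $\mathcal F_Q\le\mathcal F_R$ to ordered arguments yields $J^{n+1}\le J'^{\,n+1}$. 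Each sequence is increasing and bounded (the estimate in the proof of Theorem \ref{th:2}, where the Lemma $C_1(\kappa_M)<1$ controls the $B_\nu$-feedback and $a_\nu<1$ the scattering), so by Beppo Levi it converges to the solution's $J_\nu$, and the order passes to the limit: $J_\nu\le J'_\nu$. Then $T_\tau\le T'_\tau$ follows from $\Phi(T_\tau)=\int\kappa_\nu(1-a_\nu)J_\nu\le\int\kappa_\nu(1-a_\nu)J'_\nu=\Phi(T'_\tau)$ and monotonicity of $\Phi^{-1}$; finally $S_\nu\le S'_\nu$ together with ordered boundary data gives $I_\nu\le I'_\nu$ through the explicit formula \eqref{IntForm} (positive kernels). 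Uniqueness is the case $Q^\pm=R^\pm$: the inequality and its mirror image force equality.

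\textbf{The main obstacle.} The delicate point is to know that the solution assumed to exist in the statement coincides with the increasing limit of $\mathcal F_Q^n 0$ — that is, that the fixed point is unique — across the full range $0\le a_\nu<1$. A sup-norm Banach argument is not automatic here, since the $\Lambda$-type operator $J\mapsto\kappa_\nu\int_0^Z E_1(\kappa_\nu|\tau-t|)(\cdot)\d t$ has $\int_0^Z\kappa_\nu E_1(\kappa_\nu|\tau-t|)\d t<2$ (the Lemma), so the scattering factor need not drop below $1$ as $a_\nu\uparrow 1$ or $Z\uparrow\infty$. I would therefore avoid contraction and close the argument by a maximum principle on the difference $D_\nu=I'_\nu-I_\nu$ of two solutions with the same data: writing its transport equation and integrating along characteristics and in $\mu$, the strict positivity of $\kappa_\nu$ and $1-a_\nu$, together with the coupling identity $\int_0^\infty\kappa_\nu(1-a_\nu)(B_\nu(T_\tau)-J_\nu(\tau))\d\nu=0$, should force $D_\nu\equiv 0$. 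This absorptive sign structure, rather than any smallness of the scattering, is what I expect to be the crux, and is presumably where the computations of \cite{G87} and \cite{MER} enter; the positivity and monotonicity bookkeeping of the first two paragraphs is then routine.
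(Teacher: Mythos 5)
There is a genuine gap, and you have in fact located it yourself: everything hinges on the step you defer to an unproven ``maximum principle,'' and that step is the entire content of the theorem. Your first two paragraphs only compare the \emph{particular} solutions obtained as increasing limits of the iteration started at $J^0\equiv 0$; to transfer that comparison to the \emph{arbitrary} solutions $(I_\nu,T)$ and $(I'_\nu,T')$ named in the statement you need to know that each such solution coincides with the iteration limit, i.e.\ you need uniqueness of the fixed point. But uniqueness is exactly the special case $Q^\pm=R^\pm$ of the theorem you are proving, so the argument is circular unless the third paragraph is carried out in full. Moreover, the sketch you give there would fail as stated: if $D_\nu=I'_\nu-I_\nu$ is the difference of two solutions, its transport equation has source $\kappa_\nu a_\nu(J'_\nu-J_\nu)+\kappa_\nu(1-a_\nu)\bigl(B_\nu(T')-B_\nu(T)\bigr)$, which has no sign, and simply integrating along characteristics and in $\mu,\nu$ while invoking the coupling identity \eqref{IT} produces only the conservation law $\partial_\tau\int_0^\infty\int_{-1}^1\mu D_\nu\,\d\mu\,\d\nu=0$, a statement about the net flux that is perfectly compatible with $D_\nu\not\equiv 0$.

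The paper's proof supplies precisely the machinery your sketch lacks, and it consists of three non-obvious moves. First (Mercier's accretivity trick), one multiplies the difference equation by $s_+(I_\nu-I'_\nu)$, where $s_+(z)=1_{z\ge0}$; the scattering term is handled by $(J_\nu-J'_\nu)s_+(I_\nu-I'_\nu)\le\tfrac12\int_{-1}^1(I_\nu-I'_\nu)_+\,\d\mu'$, and the Planck term by the observation that \eqref{IT} permits replacing $s_+(I_\nu-I'_\nu)$ with $s_+(I_\nu-I'_\nu)-s_+(T-T')$, combined with the identity $s_+(T-T')=s_+\bigl(B_\nu(T)-B_\nu(T')\bigr)$ valid for every $\nu$ because $B_\nu$ is increasing; this yields $\partial_\tau\langle\mu(I_\nu-I'_\nu)_+\rangle\le 0$. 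Second, the ordered boundary data force this nonincreasing flux of the positive part to be $\ge 0$ at $\tau=Z$ and $\le 0$ at $\tau=0$, hence identically zero, which also gives $s_+(I_\nu-I'_\nu)=s_+(T-T')$ a.e. Third --- and this is the step the authors explicitly flag as absent from \cite{MER} and borrowed from the structure of \cite{G87} --- one computes the Chandrasekhar $K$-invariant and finds $\partial_\tau\bigl\langle\tfrac{\mu^2}{\kappa_\nu}(I_\nu-I'_\nu)_+\bigr\rangle=0$; evaluating this constant at $\tau=0$, where the incoming positive part vanishes, finally converts ``zero flux of the positive part'' into $(I_\nu-I'_\nu)_+\equiv 0$, i.e.\ $I_\nu\le I'_\nu$ and then $T\le T'$. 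None of these three steps appears in your proposal, and without the third one in particular there is no known way to pass from the vanishing of $\langle\mu(I_\nu-I'_\nu)_+\rangle$ to the pointwise comparison, so the crux cannot be dismissed as ``routine bookkeeping.''
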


\medskip
One has also the following form of a Maximum Principle.
\begin{Cor}
Let the hypotheses of Theorem \ref{th:2} hold. Let $Q^\pm_\nu(\mu)\le B_\nu(T_M)$ (resp. $Q^\pm_\nu(\mu)\ge B_\nu(T_m)$) for a.e. $(\mu,\nu)\in(0,1)\times(\R^+$. Then
 $a.e. (\tau,\mu)\in(-1,1)\times(0,\infty)$,
$$
\begin{aligned}
I_\nu(\tau,\mu)\le B_\nu(T_M)\text{ and }T_\tau\le T_M
\qquad 
\text{ resp. }I_\nu(\tau,\mu)\ge B_\nu(T_m)\text{ and }T_\tau\ge T_m
\end{aligned}
$$
\end{Cor}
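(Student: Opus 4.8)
The plan is to obtain the Corollary as an immediate consequence of the comparison principle in Theorem~\ref{T-Uniq}, by comparing the solution $(I_\nu,T)$ against a constant black-body equilibrium that serves as a barrier. The starting point is the observation that, for any fixed temperature $\theta$, the pair that is constant in both $\tau$ and $\mu$, namely $\bar I_\nu(\tau,\mu)\equiv B_\nu(\theta)$ and $\bar T_\tau\equiv\theta$, solves the whole stationary system \eqref{RTSlabDisc}. Indeed $\p_\tau\bar I_\nu=0$ and $\bar J_\nu=\tfrac12\int_{-1}^1 B_\nu(\theta)\d\mu=B_\nu(\theta)$, so the transport equation collapses to the identity $\kappa_\nu B_\nu(\theta)=\kappa_\nu a_\nu B_\nu(\theta)+\kappa_\nu(1-a_\nu)B_\nu(\theta)$; the energy balance $\int_0^\infty\kappa_\nu(1-a_\nu)B_\nu(\theta)\d\nu=\int_0^\infty\kappa_\nu(1-a_\nu)\bar J_\nu\d\nu$ is trivially satisfied; and the incoming boundary values of $\bar I_\nu$ are the constant data $\bar Q^\pm_\nu(\mu)\equiv B_\nu(\theta)$.

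First I would establish the upper bound. Choosing $\theta=T_M$, the profile $(B_\nu(T_M),T_M)$ is, by the uniqueness part of Theorem~\ref{T-Uniq}, the solution of \eqref{RTSlabDisc} associated with the boundary data $R^\pm_\nu(\mu):=B_\nu(T_M)$. The hypotheses inherited from Theorem~\ref{th:2} give $0\le Q^\pm_\nu(\mu)$, the present assumption gives $Q^\pm_\nu(\mu)\le B_\nu(T_M)=R^\pm_\nu(\mu)$, and $R^\pm_\nu\in L^1((0,1)\times\R^+)$ since $\int_0^\infty B_\nu(T_M)\d\nu=B_0T_M^4<\infty$ by the Stefan--Boltzmann identity. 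Theorem~\ref{T-Uniq} then applies with the pair of data $Q^\pm_\nu\le R^\pm_\nu$ and yields at once $I_\nu(\tau,\mu)\le B_\nu(T_M)$ and $T_\tau\le T_M$ for a.e. $(\tau,\mu)$.

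The lower bound is entirely symmetric. Taking $\theta=T_m$, the constant profile $(B_\nu(T_m),T_m)$ solves \eqref{RTSlabDisc} with data $B_\nu(T_m)$, and since $0\le B_\nu(T_m)\le Q^\pm_\nu(\mu)$ by assumption, Theorem~\ref{T-Uniq} applied with the two data in the opposite order returns $B_\nu(T_m)\le I_\nu(\tau,\mu)$ and $T_m\le T_\tau$ a.e.

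I expect the only delicate points to be bookkeeping rather than conceptual. One must verify that the constant profile is genuinely admissible for Theorem~\ref{T-Uniq} — in particular that its boundary data $B_\nu(\theta)$ belongs to $L^1((0,1)\times\R^+)$, which is precisely the finiteness of the Stefan--Boltzmann integral — and that the sign condition $0\le Q^\pm_\nu$ required by Theorem~\ref{T-Uniq} is the one already assumed in Theorem~\ref{th:2}. No further obstacle should arise, because Theorem~\ref{T-Uniq} carries out all the analysis; the Corollary simply records that the constant-temperature black-body fields are the extremal barriers for the comparison principle.
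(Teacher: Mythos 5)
Your proof is correct and is essentially the paper's own argument: the paper likewise observes that the constant black-body state $J_\nu=B_\nu(T_M)$, $T[J]=T_M$ (resp.\ with $T_m$) is the solution of the slab problem with boundary data $R^\pm_\nu=B_\nu(T_M)$ (resp.\ $B_\nu(T_m)$) and then invokes the comparison principle of Theorem~\ref{T-Uniq}. Your additional bookkeeping (checking that the constant profile satisfies the transport equation and energy balance, and that $B_\nu(\theta)\in L^1$ via the Stefan--Boltzmann integral) only makes explicit what the paper leaves implicit.
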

The proof relies partially on a difficult argument due to \cite{MER}. It will be published in \cite{FGOP}.

\section{An application to the temperature in the Earth atmosphere}

A numerical test is reported on Figures \ref{transmittance} and \ref{fem3}.  It is an attempt at the simulation of the effect of an increase of \texttt{CO}$_2$ in the atmosphere.  Our purpose is only to assess that the numerical method can detect such a small change of $\kappa_\nu$.

Equation (\ref{g2}) is solved by a few steps of dichotomy followed by a few New steps. When $\kappa_\nu$ is larger than 4 some instabilities occur, probably in the exponential integrals. This point will be investigated in the future.

The physical and numerical parameters are
\begin{itemize}
\item Atmosphere thickness: 12km
\item Scaled sunlight power hitting the top of the atmosphere: $3.042\times 10^{-5}$
\item Percentage of sunlight reaching the ground unaffected: 0.99
\item Percentage reemitted (Earth albedo): 10\%.
\item Percentage of sunlight being a source at high altitude ($Q^-$): 0.1\%
\item Cloud (isotropic) scattering: 20\%. Cloud position : between 6 and 9km
\item Rayleigh scattering: 20\% above 9km
\item average absorption coefficient $\kappa_0= 1.225$
\item density drop versus altitude : $\rho_0 exp(-z)$
\item Discretization: 60 altitude stations, 300 frequencies (unevenly distributed)
\item Number of iterations 22. Computing time 30" per cases.
\end{itemize}
The results are very sensitive to the value of $Q^-$ and th the earth albedo.
The values for $\kappa_\nu$ are taken from Russian measurements posted on wikipedia

\texttt{https://commons.wikimedia.org/wiki/File:Atmosfaerisk\_spredning-ru.svg}.

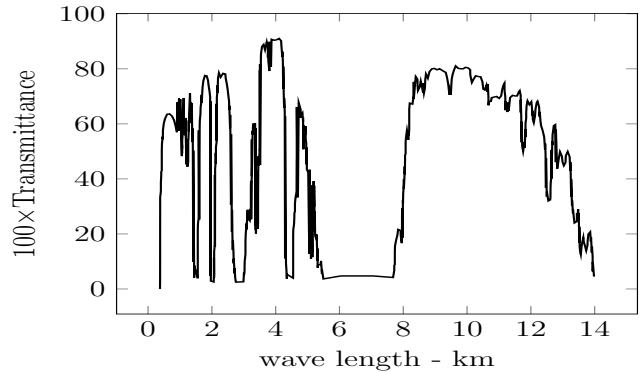
\begin{figure}[htbp]
\begin{center}
\begin{tikzpicture}[yscale=0.7]
\begin{axis}[legend style={at={(1,1)},anchor=north east}, compat=1.3,
  xlabel= {wave length - km},
  ylabel= {$100\times$Transmittance}]
\addplot[thick,solid,color=black,mark=none,mark size=1pt] table [x index=0, y index=1]{transmit1.txt};
%
\end{axis}
\end{tikzpicture}
\caption{Transmittance $t_e$ versus wave-length digitilized from 
\\
\texttt{commons.wikimedia.org/wiki/File:Atmosfaerisk\_spredning-ru.svg} 
The window around 3 is blocked by \texttt{CO}$_2$. The absorption is related to the transmittance $t_e$ by $\kappa_\nu = -\log{t_e}$.  }
\label{transmittance}
\end{center}
\end{figure}

\begin{figure}[htbp]
\begin{minipage} [b]{0.4\textwidth} 
\begin{center}
\begin{tikzpicture}[scale=0.7]
\begin{axis}[legend style={at={(1,1)},anchor=north east}, compat=1.3,
  xlabel= {Altitude - km},
  ylabel= {Scaled Temperature (K)}]
\addplot[thick,dotted,color=black,mark=none,mark size=1pt] table [x index=0, y index=1]{temperatures.txt};
\addlegendentry{ $T_{|\kappa_0}$}
\addplot[thick,solid,color=blue,mark=none,mark size=1pt] table [x index=0, y index=2]{temperaturesb.txt};
\addlegendentry{ $T_{|\kappa^1_\nu}$}
\addplot[thick,solid,color=red,mark=none,mark size=1pt] table [x index=0, y index=2]{temperatures.txt};
\addlegendentry{ $T_{|\kappa^2_\nu}$}
%
\end{axis}
\end{tikzpicture}
\label{fembig2}
\end{center}
\end{minipage}
\hskip0.5cm
\begin{minipage} [b]{0.4\textwidth}. 
\begin{center}
\begin{tikzpicture}[scale=0.7]
\begin{axis}[legend style={at={(1,1)},anchor=north east}, compat=1.3,
   xmax=2,
   xlabel= {Infrared Frequencies},
  ylabel= {Scaled $\kappa_\nu$ / Scaled mean light intensity}
  ]
\addplot[thick,solid,color=red,mark=none,mark size=1pt] table [x index=0, y index=1]{imeanz.txt};
\addlegendentry{ $J(Z)|_{\kappa^2_\nu}$}
\addplot[thick,solid,color=blue,mark=none,mark size=1pt] table [x index=0, y index=1]{imeanzb.txt};
\addlegendentry{ $J(Z)|_{\kappa^1_\nu}$}
\addplot[thick,dotted,color=black,mark=none, mark size=1pt] table [x index=0, y index=2]{imeanz.txt};
\addlegendentry{ $J(Z)|_{\kappa_0}$}

\addplot[thick,solid,color=green,mark=none, mark size=1pt] table [x index=0, y index=3]{imeanz.txt};
\addlegendentry{ $4\times\kappa^2_\nu$}

\addplot[thick,solid,color=pink,mark=none, mark size=1pt] table [x index=0, y index=3]{imeanzb.txt};
\addlegendentry{ $4\times\kappa^1_\nu$}

\end{axis}
\end{tikzpicture}
\end{center}
\end{minipage}

\caption{\label{fem3}  Scaled temperatures (left) : 3 curves $z\to T(z)$ are plotted. One computed with $\kappa_0=1.225$ which corresponds to a grey atmosphere. One with $\kappa_\nu$ shown on the right in pink color which corresponds to Figure \ref{transmittance}. The third one is with $\kappa_\nu$ shown in green on the right where the the transparent window around frequency $1$ has been blocked.  On the right the mean light intensity at altitude $Z$ are shown (mostly outgoing waves).  Filling the transparent window results in an elevation of temperature.}
\end{figure}
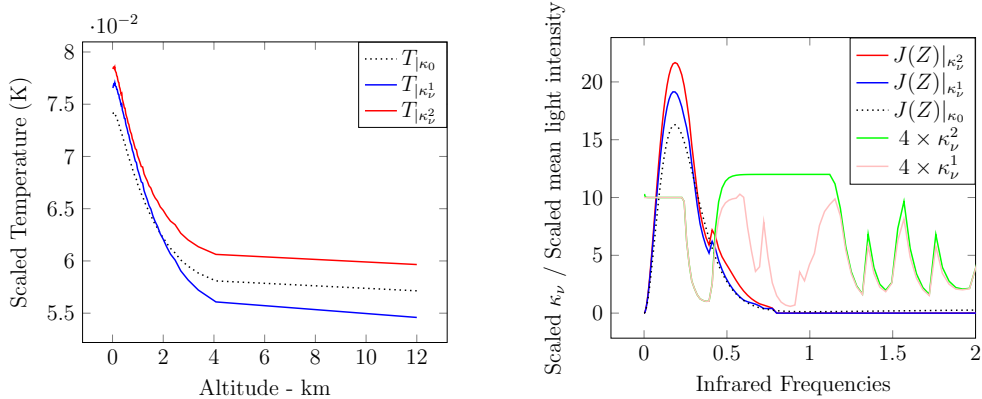

Figure transmittance https://commons.wikimedia.org/wiki/File:Atmosfaerisk\_spredning-ru.svg

\section{Conclusion}
Results obtained here are in continuation of \cite{G87},\cite{MER},\cite{BAR}, recently reviewed for possible applications to climatology in \cite{CBOP} and \cite{OP}.
Existence and uniqueness for the radiative transfer equations had remained open in the context of nuclear engineering.  For incompressible fluids it is not unrealistic to assume that the dependence of the absorption coefficient $\kappa_\nu$ upon the temperature can be replaced by an explicit dependence on altitude.  
This is the key simplification by which existence, uniqueness and monotone fast and accurate numerical schemes could be found. Hence, adding RT to a Navier-Stokes solver is easy and fast when radiations come from one direction only.

As a final remark note that it seems doable to extend the method to the general case where $\kappa_\nu$ depends on $\tau$ and $T$.  Indeed
if the dependency $\tau\mapsto\kappa_\nu(\tau)$ is guessed only approximately, then knowing $\kappa_\nu^M>\kappa_\nu(\tau)$ independent of $\tau$ is enough to apply the method with $\kappa_M$ on the left of the equation for $I_\nu$ with a correction on the right equal to $(\kappa_\nu^M-\kappa_\nu(\tau)) I_\nu$; this correction seems compatible with the monotone convergence of the temperature.  Then the method could also be extended to the case $\kappa$ function of $T$ by an additional algorithmic m-loop using $\kappa(T^m)$ instead of $\kappa(T)$ and then updating $T^m$ to the $T$ just computed.

In this article the numerical computations are only given for showing the potential of the method.  Real life applications, coupling RT to the full Navier-Stokes equations requires supercomputing power and will be done later.

\bibliographystyle{alpha}
\nocite{*}

\bibliography{samplebib}


\section{APPENDIX not part of this Compte-Rendus de M\'ecanique: Proofs of the results quoted above}

Consider the problem
\begin{equation}\label{RTSlab}
\begin{aligned}
{}&(\mu\partial_\tau +\kappa_\nu)I_\nu(\tau,\mu)=\kappa_\nu a_\nu J_\nu(\tau)+\kappa_\nu(1-a_\nu)B_\nu(T(\tau))\,,
\\
&I_\nu(0,\mu)=Q^+_\nu(\mu)\,,\quad I_\nu(Z,-\mu)=Q^-_\nu(\mu)\,,\qquad 0<\mu<1\,,
\\
&\partial_\tau\int_0^\infty\int_{-1}^1\mu I_\nu(\tau,\mu)d\mu d\nu=0\,,
\end{aligned}
\end{equation}
with the notation
$$
 J_\nu(\tau):=\tfrac12\int_{-1}^1I_\nu(\tau,\mu)d\mu\,.
$$
The last equality in \eqref{RTSlab} implies that
\begin{equation}\label{IT}
\int_0^\infty\kappa_\nu(1-a_\nu) J_\nu(\tau)d\nu=\int_0^\infty\kappa_\nu(1-a_\nu)B_\nu(T(\tau))d\nu
\end{equation}
and, assuming that $0<\kappa_\nu\le\kappa_M$ while $0\le a_\nu<1$ for all $\nu>0$, the r.h.s. of \eqref{IT} defines $T$ as a functional of $ J$, henceforth denoted $T[ J]$. Thus \eqref{RTSlab} can be recast as
\begin{equation}\label{RTSlab2}
\begin{aligned}
{}&(\mu\partial_\tau +\kappa_\nu)I_\nu(\tau,\mu)=\kappa_\nu a_\nu J_\nu(\tau)+\kappa_\nu(1-a_\nu)B_\nu(T[ J](\tau))\,,
\\
&I_\nu(0,\mu)=Q^+_\nu(\mu)\,,\quad I_\nu(Z,-\mu)=Q^-_\nu(\mu)\,,\qquad 0<\mu<1\,,
\end{aligned}
\end{equation}

In order to solve numerically \eqref{RTSlab}, one uses the method of iteration on the sources. Starting from some appropriate $(I^0_\nu,T^0)$, one construct a sequence $(I^n_\nu,T^n)$ by the following prescription
\begin{equation}\label{RTSlabDisc}
\begin{aligned}
{}&(\mu\partial_\tau +\kappa_\nu)I^{n+1}_\nu(\tau,\mu)=\kappa_\nu a_\nu J^n_\nu(\tau)+\kappa_\nu(1-a_\nu)B_\nu(T^n(\tau))\,,\quad T^n=T[ J^n_\nu]
\\
&I^{n+1}_\nu(0,\mu)=Q^+_\nu(\mu)\,,\quad I^{n+1}_\nu(Z,-\mu)=Q^-_\nu(\mu)\,,\qquad 0<\mu<1\,,
\end{aligned}
\end{equation}

Applying the method of characteristics shows that
\begin{equation}\label{IntForm}
\begin{aligned}
I^{n+1}_\nu(\tau,\mu)=&e^{-\frac{\kappa_\nu\tau}{\mu}}Q^+_\nu(\mu)\mathbf 1_{\mu>0}+e^{-\frac{\kappa_\nu(Z-\tau)}{|\mu|}}Q^-_\nu(|\mu|)\mathbf 1_{\mu<0}
\\
&+\mathbf 1_{\mu>0}\int_0^\tau e^{-\frac{\kappa_\nu(\tau-t)}{\mu}}\tfrac{\kappa_\nu}{\mu}(a_\nu J^n_\nu(t)+(1-a_\nu)B_\nu(T^n(t)))dt
\\
&+\mathbf 1_{\mu<0}\int_\tau^Ze^{-\frac{\kappa_\nu(t-\tau)}{|\mu|}}\tfrac{\kappa_\nu}{|\mu|}(a_\nu J^n_\nu(t)+(1-a_\nu)B_\nu(T^n(t)))dt\,.
\end{aligned}
\end{equation}

Since $B_\nu\ge 0$, this formula shows, by a straightforward induction argument, that
$$
I^0_\nu\ge 0\,,\,\,T^0\ge 0\,,\,\,Q^\pm_\nu\ge 0\implies I^n_\nu\ge 0\,.
$$
Moreover
$$
\begin{aligned}
I^{n+1}_\nu(\tau,\mu)-I^n_\nu(\tau,\mu)=
\\
+\mathbf 1_{\mu>0}\int_0^\tau e^{-\frac{\kappa_\nu(\tau-t)}{\mu}}\tfrac{\kappa_\nu}{\mu}a_\nu( J^n_\nu(t)- J^{n-1}_\nu(t))dt
\\
+\mathbf 1_{\mu>0}\int_0^\tau e^{-\frac{\kappa_\nu(\tau-t)}{\mu}}\tfrac{\kappa_\nu}{\mu}(1-a_\nu)(B_\nu(T^n(t))-B_\nu(T^{n-1}(t)))dt
\\
+\mathbf 1_{\mu<0}\int_\tau^Ze^{-\frac{\kappa_\nu(t-\tau)}{|\mu|}}\tfrac{\kappa_\nu}{|\mu|}a_\nu( J^n_\nu(t)- J^{n-1}_\nu(t))dt
\\
+\mathbf 1_{\mu<0}\int_\tau^Ze^{-\frac{\kappa_\nu(t-\tau)}{|\mu|}}\tfrac{\kappa_\nu}{|\mu|}(1-a_\nu)(B_\nu(T^n(t))-B_\nu(T^{n-1}(t)))dt&\,.
\end{aligned}
$$
Since $B_\nu$ is nondecreasing for each $\nu>0$, formula \eqref{IT} shows that
$$
 J^n_\nu\ge J^{n-1}_\nu\implies T^n\ge T^{n-1}
$$
we conclude from the equality above that
$$
I^0_\nu=0\,,\,\,T^0=0\,,\,\,Q^\pm_\nu\ge 0\implies
\left\{\begin{aligned}{}&0\le I^1_\nu\le I^2_\nu\le\ldots\le I^n_\nu\le\ldots
\\
&0\le T^1\le T^2\le\ldots\le T^n\le\ldots
\end{aligned}\right.
$$

Since the term
$$
(a_\nu J^n_\nu(t)+(1-a_\nu)B_\nu(T^n(t)))
$$
in both integrals on the r.h.s. is independent of $\mu$, one has
$$
\begin{aligned}
 J^{n+1}_\nu(\tau)=&\tfrac12\int_0^1\left(e^{-\frac{\kappa_\nu\tau}{\mu}}Q^+_\nu(\mu)+e^{-\frac{\kappa_\nu(Z-\tau)}{\mu}}Q^-_\nu(\mu)\right)d\mu
\\
&+\int_0^\tau\left(\int_0^1e^{-\frac{\kappa_\nu(\tau-t)}{\mu}}\frac{d\mu}{2\mu}\right)\kappa_\nu(a_\nu J^n_\nu(t)+(1-a_\nu)B_\nu(T^n(t)))dt
\\
&+\int_\tau^Z\left(\int_0^1e^{-\frac{\kappa_\nu(t-\tau)}{\mu}}\tfrac{d\mu}{2\mu}\right)\kappa_\nu(a_\nu J^n_\nu(t)+(1-a_\nu)B_\nu(T^n(t)))dt\,.
\end{aligned}
$$
One changes variables in the inner integral, so that
$$
\int_0^1e^{-\frac{X}{\mu}}\tfrac{d\mu}{\mu}=\int_1^\infty\frac{e^{-Xy}}{y}dy=\int_X^\infty\frac{e^{-z}}{z}dz=:E_1(X)\,.
$$
Thus
$$
\begin{aligned}
 J^{n+1}_\nu(\tau)=&\tfrac12\int_0^1\left(e^{-\frac{\kappa_\nu\tau}{\mu}}Q^+_\nu(\mu)+e^{-\frac{\kappa_\nu(Z-\tau)}{\mu}}Q^-_\nu(\mu)\right)d\mu
\\
&+\tfrac12\int_0^ZE_1(\kappa_\nu|\tau-t|)\kappa_\nu(a_\nu J^n_\nu(t)+(1-a_\nu)B_\nu(T^n(t)))dt\,.
\end{aligned}
$$

Integrating over $[0,Z]$ in $\tau$ implies that
$$
\begin{aligned}
\int_0^Z J^{n+1}_\nu(\tau)d\tau=\tfrac12\int_0^Z\int_0^1\left(e^{-\frac{\kappa_\nu\tau}{\mu}}Q^+_\nu(\mu)+e^{-\frac{\kappa_\nu(Z-\tau)}{\mu}}Q^-_\nu(\mu)\right)d\mu d\tau
\\
+\tfrac12\int_0^Z\left(\int_0^ZE_1(\kappa_\nu|\tau-t|)\kappa_\nu d\tau\right)(a_\nu J^n_\nu(t)+(1-a_\nu)B_\nu(T^n(t)))dt
\\
\le\tfrac12\int_0^Z\int_0^1\left(e^{-\frac{\kappa_\nu\tau}{\mu}}Q^+_\nu(\mu)+e^{-\frac{\kappa_\nu(Z-\tau)}{\mu}}Q^-_\nu(\mu)\right)d\mu d\tau
\\
+\tfrac12\left(\sup_{0\le t\le Z}\int_0^ZE_1(\kappa_\nu|\tau-t|)\kappa_\nu d\tau\right)\int_0^Z(a_\nu J^n_\nu(t)+(1-a_\nu)B_\nu(T^n(t)))dt&\,.
\end{aligned}
$$
Let us estimate the quantity
$$
\sup_{0\le t\le Z}\int_0^ZE_1(\kappa_\nu|\tau-t|)\kappa_\nu d\tau=\sup_{0\le s\le\kappa_\nu Z}\int_0^{\kappa_\nu Z}E_1(|\sigma-s|)d\sigma\,.
$$
Observe that
$$
\begin{aligned}
\int_0^{\kappa_\nu Z}E_1(|\sigma-s|)d\sigma=&\int_\mathbf{R}E_1(|\sigma-s|)1_{[0,\kappa_\nu Z]}(\sigma)d\sigma
\\
=&\int_\mathbf{R}E_1(|\theta|)1_{[-s,\kappa_\nu Z-s]}(\theta)d\theta
\\
\le&\int_\mathbf{R}E_1(|\theta|)1_{[-\kappa_\nu Z/2,\kappa_\nu Z/2]}(\theta)d\theta
\\
=&2\int_0^{\kappa_\nu Z/2}E_1(\theta)d\theta
\\
\le&2\int_0^{Z\kappa_M/2}E_1(\theta)d\theta=:2C_1\,.
\end{aligned}
$$
The first inequality is the elementary rearrangement inequality (Theorem 3.4 in \cite{LiebLoss}); the last one is based on the assumption $0<\kappa_\nu\le\kappa_M$. Thus
$$
\begin{aligned}
\int_0^Z J^{n+1}_\nu(\tau)d\tau
\le\tfrac12\int_0^Z\int_0^1\left(e^{-\frac{\kappa_\nu\tau}{\mu}}Q^+_\nu(\mu)+e^{-\frac{\kappa_\nu(Z-\tau)}{\mu}}Q^-_\nu(\mu)\right)d\mu d\tau
\\
+C_1\int_0^Z(a_\nu J^n_\nu(t)+(1-a_\nu)B_\nu(T^n(t)))dt&\,.
\end{aligned}
$$
Multiply both sides of this inequality by $\kappa_\nu$ and integrate in $\nu$: one finds that
$$
\begin{aligned}
\int_0^\infty\int_0^Z\kappa_\nu J^{n+1}_\nu(\tau)d\tau d\nu
\\
\le\tfrac12\int_0^\infty\int_0^Z\int_0^1\kappa_\nu\left(e^{-\frac{\kappa_\nu\tau}{\mu}}Q^+_\nu(\mu)+e^{-\frac{\kappa_\nu(Z-\tau)}{\mu}}Q^-_\nu(\mu)\right)d\mu d\tau d\nu
\\
+C_1\int_0^\infty\int_0^Z\kappa_\nu(a_\nu J^n_\nu(t)+(1-a_\nu)B_\nu(T^n(t)))dtd\nu&\,.
\end{aligned}
$$
At this point, we recall that $T^n=T[ J^n_\nu]$, so that
$$
\int_0^\infty\kappa_\nu(1-a_\nu)B_\nu(T^n(t)))d\nu=\int_0^\infty\kappa_\nu(1-a_\nu) J^n_\nu(t)d\nu\,,
$$
and hence
$$
\begin{aligned}
\int_0^\infty\int_0^Z\kappa_\nu J^{n+1}_\nu(\tau)d\tau d\nu\le C_1\int_0^\infty\int_0^Z\kappa_\nu J^n_\nu(t)dtd\nu
\\
+\tfrac12\int_0^\infty\int_0^Z\int_0^1\kappa_\nu\left(e^{-\frac{\kappa_\nu\tau}{\mu}}Q^+_\nu(\mu)+e^{-\frac{\kappa_\nu(Z-\tau)}{\mu}}Q^-_\nu(\mu)\right)d\mu d\tau d\nu&\,.
\end{aligned}
$$
The expression of the source term can be slightly reduced, by integrating out the $\tau$ variable:
$$
\int_0^Z\kappa_\nu e^{-\frac{\kappa_\nu\tau}{\mu}}d\tau=\int_0^Z\kappa_\nu e^{-\frac{\kappa_\nu(Z-\tau)}{\mu}}d\tau=\mu\left(1-e^{-\frac{\kappa_\nu Z}{\mu}}\right)\,,
$$
so that
$$
\begin{aligned}
0\le&\tfrac12\int_0^\infty\int_0^Z\int_0^1\kappa_\nu\left(e^{-\frac{\kappa_\nu\tau}{\mu}}Q^+_\nu(\mu)+e^{-\frac{\kappa_\nu(Z-\tau)}{\mu}}Q^-_\nu(\mu)\right)d\mu d\tau d\nu
\\
\le&\tfrac12\int_0^\infty\int_0^1(Q^+_\nu(\mu)+Q^-_\nu(\mu))\mu d\mu=:\mathcal{Q}\,.
\end{aligned}
$$
Thus
$$
\int_0^\infty\int_0^Z\kappa_\nu J^{n+1}_\nu(\tau)d\tau d\nu\le C_1\int_0^\infty\int_0^Z\kappa_\nu J^n_\nu(t)dtd\nu+\mathcal Q\,.
$$

Initializing the sequence $I^n_\nu$ with $I^0_\nu=0$ and $T^0=T[ J^0_\nu]=0$, one finds that
$$
\begin{aligned}
{}&\int_0^\infty\int_0^Z\kappa_\nu J^{1}_\nu(\tau)d\tau d\nu\le\mathcal Q\,,
\\
&\int_0^\infty\int_0^Z\kappa_\nu J^{2}_\nu(\tau)d\tau d\nu\le C_1\mathcal Q+\mathcal Q
\\
&\int_0^\infty\int_0^Z\kappa_\nu J^{3}_\nu(\tau)d\tau d\nu\le C^2_1\mathcal Q+C_1\mathcal Q+\mathcal Q
\end{aligned}
$$
and by induction
$$
\int_0^\infty\int_0^Z\kappa_\nu J^{n+1}_\nu(\tau)d\tau d\nu\le\mathcal Q\sum_{j=0}^nC_1^j\,.
$$

Since
$$
C_1=\int_0^{Z\kappa_M/2}E_1(\theta)d\theta<\int_0^{\infty}E_1(\theta)d\theta=\int_0^\infty\left(\int_1^\infty\frac{e^{-\theta y}}{y}dy\right)dy=\int_1^\infty\frac{dy}{y^2}=1\,,
$$
the series above converges and one has the uniform bound
$$
\int_0^\infty\int_0^Z\kappa_\nu J^{n+1}_\nu(\tau)d\tau d\nu\le\frac{\mathcal Q}{1-C_1}\,.
$$
Since 
$$
0\le I^1_\nu\le I^2_\nu\le\ldots\le I^n_\nu\le I^{n+1}_\nu\le\ldots
$$
the bound above and the Monotone Convergence Theorem implies that the sequence $I^{n+1}_\nu(\tau,\mu)$ converges for a.e. $(\tau,\mu,\nu)\in(0,Z)\times(-1,1)\times(0,+\infty)$ to a limit denoted $I_\nu(\tau,\mu)$ as $n\to\infty$. Since
$$
0\le T^1\le T^2\le\ldots\le T^n\le T^{n+1}\le\ldots
$$
we conclude from \eqref{IT} and the Monotone Convergence Theorem that $T^{n+1}(\tau)$ converges for a.e. $\tau\in(0,Z)$ to a limit denoted $T(\tau)$ as $n\to\infty$.

Then we can pass to the limit in \eqref{IntForm} as $n\to\infty$ by monotone convergence, to find that
$$
\begin{aligned}
I_\nu(\tau,\mu)=&e^{-\frac{\kappa_\nu\tau}{\mu}}Q^+_\nu(\mu)\mathbf 1_{\mu>0}+e^{-\frac{\kappa_\nu(Z-\tau)}{|\mu|}}Q^-_\nu(|\mu|)\mathbf 1_{\mu<0}
\\
&+\mathbf 1_{\mu>0}\int_0^\tau e^{-\frac{\kappa_\nu(\tau-t)}{\mu}}\tfrac{\kappa_\nu}{\mu}(a_\nu J_\nu(t)+(1-a_\nu)B_\nu(T(t)))dt
\\
&+\mathbf 1_{\mu<0}\int_\tau^Ze^{-\frac{\kappa_\nu(t-\tau)}{|\mu|}}\tfrac{\kappa_\nu}{|\mu|}(a_\nu J_\nu(t)+(1-a_\nu)B_\nu(T(t)))dt
\end{aligned}
$$
for a.e. $(\tau,\mu,\nu)\in(0,Z)\times(-1,1)\times(0,+\infty)$. One recognizes in this equality the integral formulation of \eqref{RTSlab} or \eqref{RTSlab2}.

Besides, since we have seen that
$$
\begin{aligned}
0=I^0_\nu\le I^1_\nu\le I^2_\nu\le\ldots\le I^n_\nu\le I^{n+1}_\nu\le\ldots\le I_\nu
\\
0=\!T^0\!\le T^1\!\le\!T^2\!\le\ldots\le T^n\le T^{n+1}\le\ldots\le T
\end{aligned}
$$
so that
$$
\begin{aligned}
0\le\int_0^Z( J^{n+1}_\nu- J^n_\nu)(\tau)d\tau
\\
=\tfrac12\int_0^Z\left(\int_0^ZE_1(\kappa_\nu|\tau-t|)\kappa_\nu d\tau\right)a_\nu( J^n_\nu- J^{n-1}_\nu)(t)dt
\\
+\tfrac12\int_0^Z\left(\int_0^ZE_1(\kappa_\nu|\tau-t|)\kappa_\nu d\tau\right)(1-a_\nu)(B_\nu(T^n(t))-B_\nu(T^{n-1}(t)))dt
\\
\le C_1\int_0^Z(a_\nu( J^n_\nu- J^{n-1}_\nu)(t)+(1-a_\nu)(B_\nu(T^n(t))-B_\nu(T^{n-1}(t)))dt&\,.
\end{aligned}
$$
Using again the equality
$$
\int_0^\infty\kappa_\nu(1-a_\nu)B_\nu(T^n(t)))d\nu=\int_0^\infty\kappa_\nu(1-a_\nu) J^n_\nu(t)d\nu\,,
$$
we conclude that
$$
0\le\int_0^Z\int_0^\infty\kappa_\nu( J^{n+1}_\nu- J^n_\nu)(\tau)d\nu d\tau\le C_1\int_0^Z\int_0^\infty\kappa_\nu( J^n_\nu- J^{n-1}_\nu)(t)dt\,.
$$
Hence
$$
0\le\int_0^Z\int_0^\infty\kappa_\nu( J^{n+1}_\nu- J^n_\nu)(\tau)d\nu d\tau\le C_1^n\int_0^\infty\kappa_\nu J^1_\nu(\tau)d\nu d\tau\le C_1^n\mathcal Q\,,
$$
so that
$$
0\le\int_0^Z\int_0^\infty\kappa_\nu( J_\nu- J^n_\nu)(\tau)d\nu d\tau\le C_1^n\int_0^\infty\kappa_\nu J^1_\nu(\tau)d\nu d\tau\le\frac{C_1^n\mathcal Q}{1-C_1}\,.
$$

Summarizing, we have proved the following result.

\begin{theorem}\label{T-Conv}
Assume that $0<\kappa_\nu\le\kappa_M$, while $0\le a_\nu<1$ for all $\nu>0$. Let $Q^\pm_\nu(\mu)$ satisfy
$$
\int_0^\infty\int_0^1\mu Q^\pm_\nu(\mu)d\mu d\nu\le\mathcal Q\,.
$$
Choose $I^0_\nu=0$ and $T^0=0$, and let $I^n_\nu$ and $T^n=T[ J^n_\nu]$ be the solution of \eqref{RTSlabDisc}. Then
$$
I^n_\nu(\tau,\mu)\to I_\nu(\tau,\mu)\quad\text{ and }\quad T^n(\tau)\to T(\tau)
$$
for $(\tau,\mu,\nu)\in(0,Z)\times(-1,1)\times(0,+\infty)$ as $n\to\infty$, where $(I_\nu,T)$ is a solution of \eqref{RTSlab} or \eqref{RTSlab2}. This method converges exponentially fast, in the sense that
$$
0\le\int_0^Z\int_0^\infty\kappa_\nu( J_\nu- J^n_\nu)(\tau)d\nu d\tau\le\frac{C_1^n\mathcal Q}{1-C_1}\,,
$$
and, if $0\le a_\nu\le a_M<1$ while $0<\kappa_m\le\kappa_\nu$, one has
$$
0\le\int_0^Z\alpha(T(t)^4-T^n(t)^4)dt\le\frac{C_1^n\mathcal Q}{\kappa_m(1-a_M)(1-C_1)}\,.
$$
\end{theorem}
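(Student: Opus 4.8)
The plan is to treat the source iteration \eqref{RTSlabDisc} as a monotone scheme and prove convergence by the Monotone Convergence Theorem, with a geometric rate coming from a contraction constant strictly below one. First I would record the explicit Duhamel representation \eqref{IntForm} of $I^{n+1}_\nu$ furnished by the method of characteristics. Since every kernel there is nonnegative and $T\mapsto B_\nu(T)$ is nondecreasing, starting from $I^0_\nu=0$, $T^0=0$ gives, by a straightforward induction, $0\le I^n_\nu\le I^{n+1}_\nu$ and $0\le T^n\le T^{n+1}$. The implication $J^n_\nu\ge J^{n-1}_\nu\Rightarrow T^n\ge T^{n-1}$ follows from the energy balance \eqref{IT}, because its right-hand side $T\mapsto\int_0^\infty\kappa_\nu(1-a_\nu)B_\nu(T)\,d\nu$ is strictly increasing in $T$ pointwise in $\tau$.

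The core of the argument is a uniform $L^1$ bound. Averaging the representation of $J^{n+1}_\nu$ over $\tau\in(0,Z)$ produces a source term plus $\tfrac12\int_0^Z\big(\int_0^ZE_1(\kappa_\nu|\tau-t|)\kappa_\nu\,d\tau\big)\big(a_\nu J^n_\nu(t)+(1-a_\nu)B_\nu(T^n(t))\big)\,dt$. I would bound the inner kernel through the change of variables $\sigma=\kappa_\nu\tau$ and the rearrangement inequality (Theorem 3.4 in \cite{LiebLoss}), which centers the indicator to give $\sup_t\int_0^ZE_1(\kappa_\nu|\tau-t|)\kappa_\nu\,d\tau\le 2\int_0^{Z\kappa_M/2}E_1(\theta)\,d\theta=:2C_1$, and then $C_1<\int_0^\infty E_1(\theta)\,d\theta=\int_1^\infty y^{-2}\,dy=1$. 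The decisive step is the closure: after multiplying by $\kappa_\nu$ and integrating in $\nu$, I use \eqref{IT} to replace $(1-a_\nu)B_\nu(T^n)$ by $(1-a_\nu)J^n_\nu$, so that $a_\nu J^n_\nu+(1-a_\nu)J^n_\nu=J^n_\nu$ and the source collapses to $\int_0^\infty\int_0^Z\kappa_\nu J^n_\nu$. Writing $u_n:=\int_0^\infty\int_0^Z\kappa_\nu J^n_\nu\,d\tau\,d\nu$, this yields the scalar recursion $u_{n+1}\le C_1u_n+\mathcal Q$, hence $u_n\le\mathcal Q/(1-C_1)$ by summing the geometric series.

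With the monotone sequences now uniformly bounded, the Monotone Convergence Theorem delivers pointwise limits $I_\nu$ and, through \eqref{IT}, $T$, and passing to the limit in \eqref{IntForm} — again by monotone convergence inside each integral — identifies $(I_\nu,T)$ as a solution of \eqref{RTSlab}. For the rate I would subtract two consecutive representations: the differences $J^{n+1}_\nu-J^n_\nu$ obey exactly the same closed recursion, giving $0\le\int_0^\infty\int_0^Z\kappa_\nu(J^{n+1}_\nu-J^n_\nu)\,d\tau\,d\nu\le C_1^n\mathcal Q$, and summing the tail $\sum_{k\ge n}$ produces the stated bound $C_1^n\mathcal Q/(1-C_1)$ for $J_\nu-J^n_\nu$.

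Finally, the temperature estimate is obtained by sandwiching. Using \eqref{IT} together with $0\le a_\nu\le a_M$ and $\kappa_m\le\kappa_\nu$, I bound $\int_0^\infty\kappa_\nu(1-a_\nu)\big(B_\nu(T)-B_\nu(T^n)\big)\,d\nu$ from below by $\kappa_m(1-a_M)\int_0^\infty\big(B_\nu(T)-B_\nu(T^n)\big)\,d\nu=\kappa_m(1-a_M)B_0\big(T^4-(T^n)^4\big)$ via Stefan--Boltzmann, and from above — since $0\le1-a_\nu\le1$ and $J_\nu\ge J^n_\nu$ — by $\int_0^\infty\kappa_\nu(J_\nu-J^n_\nu)\,d\nu$; integrating in $\tau$ and dividing by $\kappa_m(1-a_M)$ gives the claim with $\alpha=B_0$. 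I expect the main obstacle to be the uniform-in-$\nu$ contraction: making the rearrangement bound explicit so that $C_1$ is controlled by $\kappa_M$ alone, and recognizing that the coupling \eqref{IT} is precisely what closes the otherwise-open recursion between the $J^n_\nu$-term and the $B_\nu(T^n)$-term.
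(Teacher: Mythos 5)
Your proposal is correct and follows essentially the same route as the paper's own proof: the same Duhamel representation and monotonicity induction, the same rearrangement-inequality bound (with the same Lieb--Loss citation) giving $C_1<1$, the same closure of the recursion through \eqref{IT}, the Monotone Convergence Theorem for the limit, and the identical telescoping and sandwich arguments for the two error estimates. The only cosmetic difference is that you identify the constant $\alpha$ explicitly as the Stefan--Boltzmann constant, which the paper leaves implicit.
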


The last bound comes from the defining equality for the temperature in terms of the radiative intensity
$$
\begin{aligned}
\kappa_m(1-a_M)\alpha(T^4-(T^n)^4)=&\kappa_m(1-a_M)\int_0^\infty(B_\nu(T)-B_\nu(T^n))d\nu
\\
\le&\int_0^\infty\kappa_\nu(1-a_\nu)(B_\nu(T)-B_\nu(T^n))d\nu
\\
=&\int_0^\infty\kappa_\nu(1-a_\nu)( J_\nu- J^n_\nu)d\nu\le\int_0^\infty\kappa_\nu( J_\nu- J^n_\nu)d\nu\,.
\end{aligned}
$$

\section{Uniqueness, Maximum Principle}

This section follows computations in \cite{G87} (in the case $Z=+\infty$ and with $a_\nu=0$) and in \cite{MER}.

The rather subtle monotonicity structure of the radiative transfer equations is a striking result, discovered by Mercier in \cite{MER}. In view of the complexity of the computations in \cite{MER}, it may be useful to keep in mind the following 
simple remarks, which should be viewed as a motivation.

Consider the steady radiative transfer equation \eqref{RTSlab2} without scattering ($a_\nu=0$) in the whole space with a source term $0\le S_\nu\in L^1(\mathbf R\times(-1,1)\times(0,\infty))$:
$$
\lambda I_\nu(\tau,\mu)+\mu\partial_\tau I_\nu(\tau,\mu)+\kappa_\nu I_\nu(\tau,\mu)=\kappa_\nu B_\nu(T[I])+\lambda S_\nu(\tau,\mu)\,,\quad\tau\in\mathbf R\,,\,\,|\mu|<1\,,
$$
where $\lambda>0$. By definition of $T[I]$, one easily checks that
$$
\int_{-\infty}^\infty\int_{-1}^1\int_0^\infty I_\nu(\tau,\mu)d\nu d\mu d\tau=\int_{-\infty}^\infty\int_{-1}^1\int_0^\infty S_\nu(\tau,\mu)d\nu d\mu d\tau\,.
$$
The radiative intensity is given in terms of the temperature $T[I]$ and the source $S_\nu$ by the explicit formula
$$
\begin{aligned}
I_\nu(\tau,\mu)=&\mathbf 1_{\mu>0}\int_{-\infty}^\tau e^{-\frac{(\lambda+\kappa_\nu)(\tau-t)}{\mu}}\frac{\kappa_\nu B_\nu(T[I](t))+\lambda S_\nu(t,\mu)}{\mu}dt
\\
&+\mathbf 1_{\mu<0}\int_\tau^\infty e^{-\frac{(\lambda+\kappa_\nu)(t-\tau)}{|\mu|}}\frac{\kappa_\nu B_\nu(T[I](t))+\lambda S_\nu(t,\mu)}{|\mu|}dt\,.
\end{aligned}
$$
Now, if one replaces the source of radiation $S_\nu$ in the right hand side of this equation with a larger source $S'_\nu\ge S_\nu$, it is natural to expect that the resulting radiation intensity $I'_\nu$ will be such that the associated temperature
$T[I']\ge T[I]$. Observe now that the function $T\mapsto B_\nu(T)$ is increasing on $(0,+\infty)$ for each $\nu>0$; the explicit formula for $I_\nu$ in terms of $S_\nu$ and $T[I]$ shows that $I'_\nu(\tau,\mu)\ge I_\nu(\tau,\mu)$. 

Of course, this argument is by no means rigorous, since it rests on the assumption that $S'_\nu\ge S_\nu\implies T[I']\ge T[I]$, which, although physically plausible, has not been proved yet. (Notice however that 
$$
I'_\nu\ge I_\nu\implies T[I']\ge T[I]
$$
by \eqref{IT}, since the Planck function $B_\nu$ is increasing for each $\nu>0$.) Thus, the map $S_\nu\mapsto I_\nu$ preserves both the integral and the order between radiation intensities. Now there is a clever characterization of order 
preserving maps on $L^1$ leaving the integral invariant, which is due to Crandall and Tartar \cite{CrandallTartar}. Roughly speaking, a map from $L^1$ to itself that preserves the integral is order preserving iff it is nonexpansive in $L^1$.
This brings in the notion of $L^1$-accretivity, which is at the heart\footnote{The Crandall-Tartar lemma appeared a few years before Mercier's paper \cite{MER} on the radiative transfer equation. I learned of both results in 1984, during
discussions in Mercier's lab, either from Mercier himself, or from Tartar. At the time of this writing, I cannot remember whether the Crandall-Tartar lemma was mentioned to me in connection with Mercier's result, or for some other reason.}
of Mercier's remarkable discovery.

Indeed, the monotonicity argument above, together with Proposition 1 of \cite{CrandallTartar} (with $C=L^1(\mathbf R\times(-1,1)\times(0,\infty))^+$, which is the set of a.e. positive elements of $L^1(\mathbf R\times(-1,1)\times(0,\infty))$) 
strongly suggest that it might be a good idea\footnote{This may be a reconstruction of a discussion with Mercier in the early 1980's, unless he found the $L^1$-accretivity structure of the radiative transfer equations by some other argument
which I fail to remember.} to study
$$
\int_{-\infty}^\infty\int_{-1}^1\int_0^\infty (I^2_\nu-I^1_\nu)_+(\tau,\mu)d\nu d\mu d\tau
$$
in terms of
$$
\int_{-\infty}^\infty\int_{-1}^1\int_0^\infty (S^2_\nu-S^1_\nu)_+(\tau,\mu)d\nu d\mu d\tau
$$
where $S^1_\nu,S^2_\nu\in C$ and $I^1_\nu, I^2_\nu$ are the solutions of the steady radiative transfer equation above with source terms $S^1_\nu$ and $S^2_\nu$ respectively. (Mercier's original argument is even more complex, because he
assumes that the opacity $\kappa_\nu$ depends on the temperature $T$, and is a decreasing function of $T$ for each $\nu>0$ while $T\mapsto\kappa_\nu(T)B_\nu(T)$ is nondecreasing; the reader can easily verify that the intuitive argument
above still applies, provided of course that our physically natural assumption that $S'_\nu\ge S_\nu\implies T[I']\ge T[I]$ remains valid in this case as well.)

\smallskip
Define $s_+(z)=1_{z\ge 0}$, and $z_+=\max(z,0)$ while $z_-=\max(-z,0)$. Thus
$$
z=z_+-z_-\,,\quad|z|=z_++z_-\,,\quad z_+=zs_+(z)\,.
$$
In accordance with the discussion above, we multiply both sides of the radiative transfer equation for two solutions $I_\nu$ and $I'_\nu$ by $s_+(I_\nu-I'_\nu)$ and integrate in all variables. This is precisely Mercier's computation (simpler
because $\kappa_\nu$ is independent of the temperature).

Denote 
$$
\langle\Phi\rangle:=\int_0^\infty\int_{-1}^1\Phi(\mu,\nu)d\mu d\nu
$$
With $T=T[I]$ and $T'=T[I']$ defined by \eqref{IT}, let us compute
$$
\begin{aligned}
D:=\langle\kappa_\nu((I_\nu-I'_\nu)-a_\nu( J_\nu- J'_\nu)-(1-a_\nu)(B_\nu(T)-B_\nu(T')))s_+(I_\nu-I'_\nu)\rangle
\\
=\langle\kappa_\nu(1-a_\nu)((I_\nu-I'_\nu)-(B_\nu(T)-B_\nu(T')))s_+(I_\nu-I'_\nu)\rangle
\\
+\langle\kappa_\nu a_\nu((I_\nu-I'_\nu)-( J_\nu- J'_\nu))s_+(I_\nu-I'_\nu)\rangle=:D_1+D_2
\end{aligned}
$$
Observe that
$$
\begin{aligned}
( J_\nu- J'_\nu)s_+(I_\nu(\mu)-I'_\nu(\mu))=&\tfrac12\int_{-1}^1(I_\nu-I'_\nu)(\mu')s_+(I_\nu-I'_\nu)(\mu)d\mu'
\\
\le&\tfrac12\int_{-1}^1(I_\nu-I'_\nu)_+(\mu')d\mu'\,,
\end{aligned}
$$
so that $D_2\ge 0$.

Next 
$$
D_1=\langle\kappa_\nu(1-a_\nu)((I_\nu-I'_\nu)-(B_\nu(T)-B_\nu(T')))(s_+(I_\nu-I'_\nu)-s_+(T-T'))\rangle
$$
because
$$
T=T[I]\text{ and }T'=T[I']\implies\langle\kappa_\nu(1-a_\nu)((I_\nu-I'_\nu)-(B_\nu(T)-B_\nu(T')))\rangle=0\,.
$$
Since $B_\nu$ is increasing for each $\nu>0$, one has
$$
s_+(T-T')=s_+(B_\nu(T)-B_\nu(T'))
$$
so that
$$
D_1=\langle\kappa_\nu(1-a_\nu)((I_\nu-I'_\nu)-(B_\nu(T)-B_\nu(T')))(s_+(I_\nu-I'_\nu)-s_+(B_\nu(T)-B_\nu(T')))\rangle
$$
and 
$$
s_+\text{ nondecreasing }\implies D_1\ge 0\,.
$$

Let $I_\nu$ and $J_\nu$ be two solutions of \eqref{RTSlab2} with boundary data
$$
\begin{aligned}
I_\nu(0,\mu)=Q^+_\nu(\mu)\,,\quad I_\nu(Z,-\mu)=Q^-_\nu(\mu)\,,\qquad 0<\mu<1\,,
\\
J_\nu(0,\mu)=R^+_\nu(\mu)\,,\quad J_\nu(Z,-\mu)=R^-_\nu(\mu)\,,\qquad 0<\mu<1\,.
\end{aligned}
$$
Assume that
$$
Q^\pm_\nu(\mu)\le R^\pm_\nu(\mu)\quad\text{ for a.e. }(\mu,\nu)\in(0,1)\times(0,\infty)\,.
$$
Then
$$
\begin{aligned}
\partial_\tau\langle\mu(I_\nu-J_\nu)_+\rangle\le-\langle\kappa_\nu(1-a_\nu)((I_\nu-J_\nu)-(B_\nu(T[I])-B_\nu(T[J])))s_+(I_\nu-J_\nu)\rangle
\\
-\langle\kappa_\nu a_\nu((I_\nu-J_\nu)-( J_\nu-\tilde J_\nu))s_+(I_\nu-J_\nu)\rangle\le0&\,,
\end{aligned}
$$
so that $\tau\mapsto\langle\mu(I_\nu-J_\nu)_+\rangle(\tau)$ is nonincreasing. Since
$$
\begin{aligned}
Q^-_\nu\le R^-_\nu\implies\langle\mu(I_\nu-J_\nu)_+\rangle(Z)=\langle\mu_+(I_\nu-J_\nu)_+\rangle(Z)\ge 0\,,
\\
Q^+_\nu\le R^+_\nu\implies\langle\mu(I_\nu-J_\nu)_+\rangle(0)=-\langle\mu_-(I_\nu-J_\nu)_+\rangle(0)\le 0\,,
\end{aligned}
$$
one has
$$
\begin{aligned}
\text{for a.e. }\tau\in(0,Z)\qquad0=\langle\mu(I_\nu-J_\nu)_+\rangle
\\
=\langle\kappa_\nu a_\nu((I_\nu-J_\nu)-( J_\nu-\tilde J_\nu))s_+(I_\nu-J_\nu)\rangle
\\
=\langle\kappa_\nu(1-a_\nu)((I_\nu-J_\nu)-(B_\nu(T[I])-B_\nu(T[J])))s_+(I_\nu-J_\nu)\rangle&\,,
\end{aligned}
$$
and
$$
(I_\nu-J_\nu)_+(0,-\mu)=(I_\nu-J_\nu)_+(Z,\mu)=0\qquad\text{ for a.e. }\mu\in(0,1)\,.
$$
Besides, since $\kappa_\nu(1-a_\nu)>0$ for all $\nu>0$
$$
\begin{aligned}
0=\langle\kappa_\nu(1-a_\nu)((I_\nu-J_\nu)-(B_\nu(T[I])-B_\nu(T[J])))s_+(I_\nu-J_\nu)\rangle
\\
=\langle\kappa_\nu(1-a_\nu)((I_\nu-J_\nu)-(B_\nu(T[I])-B_\nu(T[J])))(s_+(I_\nu-J_\nu)-s_+(T[I]-T[J]))\rangle
\\
\implies s_+(I_\nu(\tau,\mu)-J_\nu(\tau,\mu))=s_+(T[I]-T[J])\text{ for a.e. }(\tau,\mu,\nu)&\,.
\end{aligned}
$$

\smallskip
At this point, we must appeal to an additional idea, which is not present in Mercier's paper \cite{MER}. Since we are dealing with solutions of the radiative transfer equation having the slab symmetry, it is natural idea to use the $K$-invariant
(in the terminology of section 10 in chapter I of Chandrasekhar \cite{CHA}). This idea\footnote{A somewhat similar idea, unfortunately unpublished, had been used by R. Sentis to simplify the uniqueness proof for the linear Milne problem studied
in \cite{BSS}.} is at the heart of the exponential decay estimate for the Milne problem obtained in \cite{G87}, and will be used here for a different purpose.

We compute
$$
\begin{aligned}
\partial_\tau\left\langle\frac{\mu^2}{\kappa_\nu}(I_\nu-J_\nu)_+\right\rangle=-\langle a_\nu\mu((I_\nu-J_\nu)-( J_\nu-\tilde J_\nu))s_+(T[I]-T[J])\rangle
\\
-\langle (1-a_\nu)\mu((I_\nu-J_\nu)-(B_\nu(T[I])-B_\nu(T[J]))s_+(T[I]-T[J])\rangle
\\
=-\langle a_\nu\mu(I_\nu-J_\nu)s_+(T[I]-T[J])\rangle-\langle (1-a_\nu)\mu(I_\nu-J_\nu)s_+(T[I]-T[J])\rangle
\\
=-\langle\mu(I_\nu-J_\nu)s_+(T[I]-T[J])\rangle=-\langle\mu(I_\nu-J_\nu)_+\rangle=0&\,,
\end{aligned}
$$
since
$$
\int_{-1}^1\mu( J_\nu(\tau)-\tilde J_\nu(\tau))d\mu=\int_{-1}^1\mu(B_\nu(T[I])-B_\nu(T[J]))d\mu=0\,.
$$
Next we integrate in $\tau\in(0,Z)$, and observe that
$$
\begin{aligned}
(I_\nu-J_\nu)_+(0,-\mu)=0\text{ and }Q^+_\nu(\mu)\le R^+_\nu(\mu)\qquad\text{ for a.e. }\mu\in(0,1)
\\
\implies\left\langle\frac{\mu^2}{\kappa_\nu}(I_\nu-J_\nu)_+\right\rangle(\tau)=\left\langle\frac{\mu^2}{\kappa_\nu}(I_\nu-J_\nu)_+\right\rangle(0)=0&\,.
\end{aligned}
$$

Summarizing, we have proved the following result.
\begin{theorem}\label{T-Uniq}
Assume that $0<\kappa_\nu\le\kappa_M$, while $0\le a_\nu<1$ for all $\nu>0$. Let $Q^\pm,R^\pm\in L^1((0,1)\times(0,\infty))$ satisfy 
$$
0\le Q^\pm_\nu(\mu)\le R^\pm_\nu(\mu)\quad\text{ for a.e. }(\mu,\nu)\in(0,1)\times(0,\infty)\,.
$$

Then, the solutions $(I_\nu,T[I])$ of \eqref{RTSlab2}, and $(J_\nu,T[J])$ of \eqref{RTSlab2} with boundary data $Q^\pm_\nu(\mu)$ replaced with $R^\pm_\nu(\mu)$ satisfy
$$
I_\nu(\tau,\mu)\le J_\nu(\tau,\mu)\text{ and }T[I](\tau)\le T[J](\tau)\quad\text{ for a.e. }(\tau,\mu)\in(-1,1)\times(0,\infty)\,.
$$

In particular, 
$$
\begin{aligned}
Q^\pm_\nu(\mu)=R^\pm_\nu(\mu)\text{ for a.e. }(\mu,\nu)\in(0,1)\times(0,\infty)
\\
\implies I_\nu(\tau,\mu)=J_\nu(\tau,\mu)\text{ and }T[I](\tau)=T[J](\tau)\quad\text{ for a.e. }(\tau,\mu)\in(-1,1)\times(0,\infty)&\,.
\end{aligned}
$$
\end{theorem}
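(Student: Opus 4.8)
The plan is to establish the comparison (and hence uniqueness) by an $L^1$-monotonicity argument of the type discovered by Mercier, motivated by the Crandall--Tartar characterization of order-preserving, integral-preserving maps on $L^1$. Let $I_\nu$ (with data $Q^\pm_\nu$) and $J_\nu$ (with data $R^\pm_\nu$) be two solutions of \eqref{RTSlab2}, set $w_\nu:=I_\nu-J_\nu$, and let $\bar J_\nu,\tilde J_\nu$ denote the mean intensities of $I_\nu$ and $J_\nu$. Subtracting the two transport equations, multiplying by the renormalizing factor $s_+(w_\nu)$, and integrating in $(\mu,\nu)$ converts the streaming term into $\partial_\tau\langle\mu\,(w_\nu)_+\rangle$ (since $\mu$ is $\tau$-independent and $z\,s_+(z)=z_+$), leaving, in the renormalized sense,
\[
\partial_\tau\langle\mu\,(w_\nu)_+\rangle\le-D,\qquad D:=\langle\kappa_\nu\bigl(w_\nu-a_\nu(\bar J_\nu-\tilde J_\nu)-(1-a_\nu)(B_\nu(T[I])-B_\nu(T[J]))\bigr)s_+(w_\nu)\rangle.
\]

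First I would verify that $D\ge0$, splitting it as $D=D_1+D_2$ along the scattering/emission decomposition. The scattering contribution $D_2$ is nonnegative because replacing $w_\nu$ by its angular average $\bar J_\nu-\tilde J_\nu$ can only decrease the quantity tested against $s_+(w_\nu)$. The emission contribution $D_1$ requires the temperature constraint \eqref{IT}: this constraint makes the $(\mu,\nu)$-average of $\kappa_\nu(1-a_\nu)(w_\nu-(B_\nu(T[I])-B_\nu(T[J])))$ vanish, so one may subtract $s_+(T[I]-T[J])$ from $s_+(w_\nu)$ inside the bracket; since $B_\nu$ is increasing one has $s_+(B_\nu(T[I])-B_\nu(T[J]))=s_+(T[I]-T[J])$, and monotonicity of $s_+$ then forces $D_1\ge0$. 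Hence $\tau\mapsto\langle\mu\,(w_\nu)_+\rangle$ is nonincreasing.

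Next I would feed in the boundary inequalities. At $\tau=Z$ the incoming directions are $\mu<0$, where $Q^-_\nu\le R^-_\nu$ kills $(w_\nu)_+$, so $\langle\mu\,(w_\nu)_+\rangle(Z)\ge0$; at $\tau=0$ the incoming directions are $\mu>0$, where $Q^+_\nu\le R^+_\nu$ kills $(w_\nu)_+$, so $\langle\mu\,(w_\nu)_+\rangle(0)\le0$. A nonincreasing function that is $\le0$ at the left endpoint and $\ge0$ at the right endpoint must vanish identically; therefore $\langle\mu\,(w_\nu)_+\rangle\equiv0$, and since then $\int_0^Z D=0$ with $D\ge0$ we also get $D\equiv0$. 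From $D_1=0$ I would extract the crucial sign-agreement $s_+(w_\nu)=s_+(T[I]-T[J])$ for a.e.\ $(\tau,\mu,\nu)$, and from the vanishing of the boundary flux that $(w_\nu)_+$ is zero in the outgoing directions at both faces.

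The hard part is that this sign-agreement does \emph{not} by itself yield $w_\nu\le0$: it only says that $I_\nu-J_\nu$ has the same sign for all $(\mu,\nu)$. The extra ingredient needed is Chandrasekhar's $K$-invariant, i.e.\ the weight $\mu^2/\kappa_\nu$ together with the common sign $s_+(T[I]-T[J])$ just obtained. Multiplying the difference equation by $\tfrac{\mu}{\kappa_\nu}s_+(T[I]-T[J])$ and integrating, both source terms carry a factor independent of $\mu$ and are annihilated by $\int_{-1}^1\mu\,d\mu=0$, so that
\[
\partial_\tau\Bigl\langle\tfrac{\mu^2}{\kappa_\nu}(w_\nu)_+\Bigr\rangle=-\langle\mu\,(w_\nu)_+\rangle=0.
\]
Thus $\langle\tfrac{\mu^2}{\kappa_\nu}(w_\nu)_+\rangle$ is conserved in $\tau$; evaluating it at $\tau=0$, where $(w_\nu)_+$ vanishes both for incoming directions (by $Q^+_\nu\le R^+_\nu$) and for outgoing ones (by the previous step), shows it equals $0$, whence $(w_\nu)_+\equiv0$, i.e.\ $I_\nu\le J_\nu$ a.e. Finally $T[I]\le T[J]$ follows from \eqref{IT} and the monotonicity of $B_\nu$, and the uniqueness assertion is the special case $Q^\pm_\nu=R^\pm_\nu$ applied in both directions.
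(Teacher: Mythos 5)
Your proposal is correct and follows essentially the same route as the paper's own proof: Mercier's renormalization multiplier $s_+(I_\nu-J_\nu)$, the decomposition $D=D_1+D_2$ with $D_2\ge 0$ by angular averaging and $D_1\ge 0$ via the constraint \eqref{IT} together with the monotonicity of $B_\nu$ and $s_+$, the sign analysis of the boundary fluxes forcing $\langle\mu(I_\nu-J_\nu)_+\rangle\equiv 0$ and the sign-agreement identity, and finally Chandrasekhar's $K$-invariant $\langle\tfrac{\mu^2}{\kappa_\nu}(I_\nu-J_\nu)_+\rangle$ to upgrade the sign agreement to $(I_\nu-J_\nu)_+\equiv 0$. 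No gaps to report.
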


\medskip
One has also the following form of Maximum Principle.
\begin{Cor}
Assume that $0<\kappa_\nu\le\kappa_M$, while $0\le a_\nu<1$ for all $\nu>0$. Let $Q^\pm_\nu(\mu)\le B_\nu(T_M)$ (resp. $Q^\pm_\nu(\mu)\ge B_\nu(T_m)$) for a.e. $(\mu,\nu)\in(0,1)\times(0,\infty)$. Then
$$
\begin{aligned}
I_\nu(\tau,\mu)\le B_\nu(T_M)\text{ and }T[I](\tau)\le T_M
\\
\text{ resp. }I_\nu(\tau,\mu)\ge B_\nu(T_m)\text{ and }T[I](\tau)\ge T_m
\\
\quad\text{ for a.e. }(\tau,\mu)\in(-1,1)\times(0,\infty)&\,.
\end{aligned}
$$
\end{Cor}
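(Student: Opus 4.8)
The plan is to deduce the Maximum Principle from the comparison statement in Theorem \ref{T-Uniq} by using the constant blackbody field as a barrier. First I would verify that the spatially homogeneous radiation field $\bar I_\nu(\tau,\mu)\equiv B_\nu(T_M)$, independent of both $\tau$ and $\mu$, is an exact solution of \eqref{RTSlab2}. Since it does not depend on $\tau$, the transport term $\mu\partial_\tau\bar I_\nu$ vanishes; its angular average is $\tfrac12\int_{-1}^1 B_\nu(T_M)\,d\mu=B_\nu(T_M)$, so the right-hand side of \eqref{RTSlab2} reduces to $\kappa_\nu a_\nu B_\nu(T_M)+\kappa_\nu(1-a_\nu)B_\nu(T[\bar I])$. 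To close this I need $T[\bar I]=T_M$, which follows by inserting the constant angular average into the defining relation \eqref{IT}: one gets $\int_0^\infty\kappa_\nu(1-a_\nu)B_\nu(T_M)\,d\nu=\int_0^\infty\kappa_\nu(1-a_\nu)B_\nu(T[\bar I])\,d\nu$, and the map $T\mapsto\int_0^\infty\kappa_\nu(1-a_\nu)B_\nu(T)\,d\nu$ is strictly increasing, hence injective. With $T[\bar I]=T_M$ the right-hand side becomes $\kappa_\nu B_\nu(T_M)=(\mu\partial_\tau+\kappa_\nu)\bar I_\nu$, so $\bar I_\nu$ is a genuine solution, carrying boundary data $R^\pm_\nu(\mu)\equiv B_\nu(T_M)$.

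Next I would apply Theorem \ref{T-Uniq} with $\bar I_\nu$ playing the role of the $R^\pm$-solution. The hypothesis $Q^\pm_\nu(\mu)\le B_\nu(T_M)$ is exactly $Q^\pm_\nu\le R^\pm_\nu$, and the barrier data $R^\pm_\nu\equiv B_\nu(T_M)$ lies in $L^1((0,1)\times(0,\infty))$ since $\int_0^\infty B_\nu(T_M)\,d\nu=B_0 T_M^4<\infty$ by the Stefan-Boltzmann law recalled above. Theorem \ref{T-Uniq} then gives $I_\nu(\tau,\mu)\le\bar I_\nu=B_\nu(T_M)$ and $T[I](\tau)\le T[\bar I]=T_M$ for a.e.\ $(\tau,\mu)$, which is the asserted upper bound. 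The lower bound is entirely symmetric: one compares $(I_\nu,T[I])$ from below against the constant barrier $B_\nu(T_m)$, whose temperature is $T_m$, invoking $Q^\pm_\nu\ge B_\nu(T_m)$ and the same theorem with the two solutions interchanged.

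The only substantive point is the verification that the constant blackbody field solves the coupled system, and within it the identification $T[\bar I]=T_M$. I expect this to be the main obstacle only in requiring care with the temperature functional: one must use the strict monotonicity of $T\mapsto\int_0^\infty\kappa_\nu(1-a_\nu)B_\nu(T)\,d\nu$ (its derivative is strictly positive because $\kappa_\nu(1-a_\nu)>0$ and $\partial_T B_\nu>0$) to conclude injectivity and hence $T[\bar I]=T_M$. Everything else is a direct transcription of the comparison principle, so no new estimates are needed.
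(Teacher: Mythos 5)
Your proof is correct and follows exactly the paper's argument: the paper's own (one-line) proof consists of observing that the constant field $J_\nu=B_\nu(T_M)$ with $T[J]=T_M$ solves \eqref{RTSlab2} with boundary data $R^\pm_\nu=B_\nu(T_M)$, and then invoking the comparison principle of Theorem \ref{T-Uniq}, which is precisely your barrier construction. Your write-up simply makes explicit the verification (vanishing transport term, injectivity of $T\mapsto\int_0^\infty\kappa_\nu(1-a_\nu)B_\nu(T)\,d\nu$, integrability of the constant data) that the paper leaves implicit.
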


\begin{proof}
Indeed, $J_\nu=B_\nu(T_M)$ and $T[J]=T_M$ (resp. $J_\nu=B_\nu(T_m)$ and $T[J]=T_m$) is the solution of \eqref{RTSlab2} with boundary data $R^\pm_\nu=B_\nu(T_M)$ (resp. $R^\pm_\nu=B_\nu(T_m)$).
\end{proof}

\smallskip
In Theorem \ref{T-Conv}, if one has the stronger condition 
$$
0\le Q^\pm_\nu(\mu)\le B_\nu(T_M)\quad\text{ for a.e. }(\mu,\nu)\in(0,1)\times(0,\infty)
$$
one obtains the following bound for the numerical and theoretical solutions
$$
0\le I^1_\nu\le I^2_\nu\le\ldots\le I^n_\nu\le\ldots I_\nu\le B_\nu(T_M)
$$
while
$$
0\le T^1\le T_2\le\ldots\le T^n\le\ldots\le T\le T_M\,.
$$

\section{Radiative Transfer with Rayleigh Scattering in a Slab}

In this section, we discuss the same problem as in the previous section, with the isotropic scattering replaced by the Rayleigh phase function. In the case of slab symmetry, the Rayleigh phase function is
$$
p(\mu,\mu')=\tfrac3{16}(3-\mu^2)+\tfrac3{16}(3\mu^2-1)\mu'^2
$$
(see section 11.2 in chapter I of \cite{CHA}). Observe that
\begin{equation}\label{p>0}
p(\mu,\mu')=\tfrac3{16}(3+3\mu^2\mu'^2-\mu^2-\mu'^2)\ge\tfrac3{16}>0\,,
\end{equation}
while
\begin{equation}\label{intp=1}
\tfrac12\int_{-1}^1p(\mu,\mu')d\mu=\tfrac3{16}(6+3\cdot\tfrac23\mu'^2-\tfrac23-2\mu'^2)=1\,.
\end{equation}
Keeping \eqref{IT} as the defining equation for $T[I]$, the problem \eqref{RTSlab2} becomes
\begin{equation}\label{RTSlab2R}
\begin{aligned}
{}&(\mu\partial_\tau +\kappa_\nu)I_\nu(\tau,\mu)=&&\!\!\!\tfrac38\kappa_\nu a_\nu((3-\mu^2) J_\nu(\tau)+(3\mu^2-1){K_\nu}(\tau))
\\
& &&\!\!\!+\kappa_\nu(1-a_\nu)B_\nu(T[ J](\tau))\,,
\\
&I_\nu(0,\mu)=Q^+_\nu(\mu)\,,\quad&& I_\nu(Z,-\mu)=Q^-_\nu(\mu)\,,\qquad 0<\mu<1\,,
\end{aligned}
\end{equation}
with $K=\tfrac12\int_{-1}^1\mu^2I\d\mu$, and one easily checks that \eqref{IT} and \eqref{intp=1} imply that
$$
\partial_\tau\int_0^\infty\int_{-1}^1\mu I_\nu(\tau,\mu)d\mu d\nu=0\,.
$$

Starting from $I^0_\nu(\tau,\mu)=0$ and $T^0(\tau)=0$, one solves for $I^{n+1}$ 
\begin{equation}\label{RTSlab2RDisc}
\begin{aligned}
{}&(\mu\partial_\tau +\kappa_\nu)I^{n+1}_\nu(\tau,\mu)=&&\!\!\!\tfrac38\kappa_\nu a_\nu((3-\mu^2) J^n_\nu(\tau)+(3\mu^2-1){K^n_\nu}(\tau))
\\
& &&\!\!\!+\kappa_\nu(1-a_\nu)B_\nu(T^n(\tau))\,,\qquad T^n:=T[ J^n]
\\
&I^{n+1}_\nu(0,\mu)=Q^+_\nu(\mu)\,,\quad&& I^{n+1}_\nu(Z,-\mu)=Q^-_\nu(\mu)\,,\qquad 0<\mu<1\,.
\end{aligned}
\end{equation}
Since $B_\nu$ is nondecreasing for each $\nu>0$, one easily checks with \eqref{p>0} that
$$
\begin{aligned}
0=I^0_\nu\le I^1_\nu\le I^2_\nu\le\ldots\le I^n_\nu\le I^{n+1}_\nu\le\ldots
\\
0=\!T^0\!\le T^1\!\le\!T^2\!\le\ldots\le T^n\le T^{n+1}\le\ldots
\end{aligned}
$$

Explicitly
\begin{equation}\label{IntFormR}
\begin{aligned}
I^{n+1}_\nu(\tau,\mu)=&e^{-\frac{\kappa_\nu\tau}{\mu}}Q^+_\nu(\mu)\mathbf 1_{\mu>0}+e^{-\frac{\kappa_\nu(Z-\tau)}{|\mu|}}Q^-_\nu(|\mu|)\mathbf 1_{\mu<0}
\\
&+\mathbf 1_{\mu>0}\int_0^\tau e^{-\frac{\kappa_\nu(\tau-t)}{\mu}}\tfrac{\kappa_\nu}{\mu}\tfrac38a_\nu((3-\mu^2) J^n_\nu(t)+(3\mu^2-1){K^n_\nu}(t))dt
\\
&+\mathbf 1_{\mu>0}\int_0^\tau e^{-\frac{\kappa_\nu(\tau-t)}{\mu}}\tfrac{\kappa_\nu}{\mu}(1-a_\nu)B_\nu(T^n(t))dt
\\
&+\mathbf 1_{\mu<0}\int_\tau^Ze^{-\frac{\kappa_\nu(t-\tau)}{|\mu|}}\tfrac{\kappa_\nu}{|\mu|}\tfrac38a_\nu((3-\mu^2) J^n_\nu(t)+(3\mu^2-1){K^n_\nu}(t))dt
\\
&+\mathbf 1_{\mu<0}\int_\tau^Ze^{-\frac{\kappa_\nu(t-\tau)}{|\mu|}}\tfrac{\kappa_\nu}{|\mu|}(1-a_\nu)B_\nu(T^n(t))dt\,.
\end{aligned}
\end{equation}

This scheme can be reduced to the following iteration to compute first $ J_\nu$ and $\tilde\mu^2I_\nu$ as follows:
$$
\begin{aligned}
 J^{n+1}_\nu(\tau)=&\tfrac12\int_0^1\left(e^{-\frac{\kappa_\nu\tau}{\mu}}Q^+_\nu(\mu)\mathbf 1_{\mu>0}+e^{-\frac{\kappa_\nu(Z-\tau)}{|\mu|}}Q^-_\nu(|\mu|)\mathbf 1_{\mu<0}\right)d\mu
\\
&+\tfrac3{16}\int_0^ZE_1(\kappa_\nu|\tau-t|)\kappa_\nu a_\nu(3 J^n_\nu(t)-{K^n_\nu}(t))dt
\\
&+\tfrac3{16}\int_0^ZE_3(\kappa_\nu|\tau-t|)\kappa_\nu a_\nu(3{K^n_\nu}(t)- J^n_\nu(t))dt
\\
&+\tfrac12\int_0^ZE_1(\kappa_\nu|\tau-t|)\kappa_\nu(1-a_\nu)B_\nu(T^n(t))dt\,,
\end{aligned}
$$
while
$$
\begin{aligned}
{K^{n+1}_\nu}(\tau)=&\tfrac12\int_0^1\left(e^{-\frac{\kappa_\nu\tau}{\mu}}Q^+_\nu(\mu)\mathbf 1_{\mu>0}+e^{-\frac{\kappa_\nu(Z-\tau)}{|\mu|}}Q^-_\nu(|\mu|)\mathbf 1_{\mu<0}\right)\mu^2d\mu
\\
&+\tfrac3{16}\int_0^ZE_3(\kappa_\nu|\tau-t|)\kappa_\nu a_\nu(3 J^n_\nu(t)-{K^n_\nu}(t))dt
\\
&+\tfrac3{16}\int_0^ZE_5(\kappa_\nu|\tau-t|)\kappa_\nu a_\nu(3{K^n_\nu}(t)- J^n_\nu(t))dt
\\
&+\tfrac12\int_0^ZE_3(\kappa_\nu|\tau-t|)\kappa_\nu(1-a_\nu)B_\nu(T^n(t))dt\,,
\end{aligned}
$$
where we have denoted
$$
E_n(X):=\int_1^\infty\frac{e^{-Xy}}{y^n}dy=\int_X^\infty\frac{e^{-z}}{z^n}dz=\int_0^1e^{-X/\mu}\mu^{n-2}d\mu\,.
$$

Once $ J_\nu$ and ${K_\nu}$ are known, the right hand side of \eqref{RTSlab2R} is known, and $I_\nu$ is obtained by a simple quadrature formula.

\smallskip
Returning to \eqref{IntFormR}, assume that 
$$
0\le Q^\pm_\nu\le B_\nu(T_M)\,,\,\,0\le I^n_\nu\le B_\nu(T_M)\text{ and }0\le T^n\le T_M\,.
$$
Then
$$
\begin{aligned}
I^{n+1}_\nu(\tau,\mu)\le&\left(e^{-\frac{\kappa_\nu\tau}{\mu}}\mathbf 1_{\mu>0}+e^{-\frac{\kappa_\nu(Z-\tau)}{|\mu|}}\mathbf 1_{\mu<0}\right)B_\nu(T_M)
\\
&+\mathbf 1_{\mu>0}\int_0^\tau e^{-\frac{\kappa_\nu(\tau-t)}{\mu}}\tfrac{\kappa_\nu}{\mu}\tfrac38a_\nu((3-\mu^2)B_\nu(T_M)+(\mu^2-\tfrac13)B_\nu(T_M))dt
\\
&+\mathbf 1_{\mu>0}\int_0^\tau e^{-\frac{\kappa_\nu(\tau-t)}{\mu}}\tfrac{\kappa_\nu}{\mu}(1-a_\nu)B_\nu(T_M)dt
\\
&+\mathbf 1_{\mu<0}\int_\tau^Ze^{-\frac{\kappa_\nu(t-\tau)}{|\mu|}}\tfrac{\kappa_\nu}{|\mu|}\tfrac38a_\nu((3-\mu^2)B_\nu(T_M)+(\mu^2-\tfrac13)B_\nu(T_M))dt
\\
&+\mathbf 1_{\mu<0}\int_\tau^Ze^{-\frac{\kappa_\nu(t-\tau)}{|\mu|}}\tfrac{\kappa_\nu}{|\mu|}(1-a_\nu)B_\nu(T_M)dt
\\
=&\left(e^{-\frac{\kappa_\nu\tau}{\mu}}\mathbf 1_{\mu>0}+e^{-\frac{\kappa_\nu(Z-\tau)}{|\mu|}}\mathbf 1_{\mu<0}\right)B_\nu(T_M)
\\
&+\mathbf 1_{\mu>0}\int_0^\tau e^{-\frac{\kappa_\nu(\tau-t)}{\mu}}\tfrac{\kappa_\nu}{\mu}(\tfrac38a_\nu(3-\tfrac13)+(1-a_\nu))B_\nu(T_M))dt
\\
&+\mathbf 1_{\mu<0}\int_\tau^Ze^{-\frac{\kappa_\nu(t-\tau)}{|\mu|}}\tfrac{\kappa_\nu}{|\mu|}(\tfrac38a_\nu(3-\tfrac13)+(1-a_\nu))B_\nu(T_M))dt
\\
=&\left(e^{-\frac{\kappa_\nu\tau}{\mu}}\mathbf 1_{\mu>0}+e^{-\frac{\kappa_\nu(Z-\tau)}{|\mu|}}\mathbf 1_{\mu<0}\right)B_\nu(T_M)
\\
&+B_\nu(T_M)\left(\mathbf 1_{\mu>0}\int_0^\tau e^{-\frac{\kappa_\nu(\tau-t)}{\mu}}\tfrac{\kappa_\nu}{\mu}dt+\mathbf 1_{\mu<0}\int_\tau^Ze^{-\frac{\kappa_\nu(t-\tau)}{|\mu|}}\tfrac{\kappa_\nu}{|\mu|}dt\right)
\\
=&\left(e^{-\frac{\kappa_\nu\tau}{\mu}}\mathbf 1_{\mu>0}+e^{-\frac{\kappa_\nu(Z-\tau)}{|\mu|}}\mathbf 1_{\mu<0}\right)B_\nu(T_M)
\\
&+B_\nu(T_M)\left(\mathbf 1_{\mu>0}\left(1-e^{-\frac{\kappa_\nu\tau}{\mu}}\right)+\mathbf 1_{\mu<0}\left(1-e^{-\frac{\kappa_\nu(Z-\tau)}{|\mu|}}\right)\right)
\\
=&B_\nu(T_M)\,.
\end{aligned}
$$
Besides
$$
T^{n+1}=T[I^{n+1}]\le T[B_\nu(T_M)]=T_M
$$
(using again that $T\mapsto B_\nu(T)$ is increasing for each $\nu>0$ while $\kappa_\nu(1-a_\nu)>0$ for all $\nu>0$).

\smallskip
Summarizing, we have proved the following result.

\begin{theorem}\label{T-CvgR}
Assume that $\kappa_\nu>0$ while $0\le a_\nu<1$ for all $\nu>0$. Let $Q^\pm$ satisfy
$$
0\le Q^\pm_\nu(\mu)\le B_\nu(T_M)\qquad\text{ for all }\mu\in(-1,1)\text{ and }\nu>0\,.
$$
The iteration method \eqref{RTSlab2RDisc} starting from $I^0_\nu=0$ and $T^0=0$ defines a sequence of radiative intensities $I^n_\nu$ and temperatures $T^n$ converging pointwise to $I_\nu$ and $T=T[I]$ respectively, which is a solution of \eqref{RTSlab2R}.
\end{theorem}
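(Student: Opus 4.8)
The plan is to obtain Theorem~\ref{T-CvgR} as a direct consequence of the two structural facts already assembled above, namely the \emph{monotonicity} of the source iteration \eqref{RTSlab2RDisc} and the \emph{uniform a priori ceiling} $I^n_\nu\le B_\nu(T_M)$ just derived from \eqref{IntFormR}. The architecture mirrors the isotropic case of Theorem~\ref{T-Conv}, but, because the Rayleigh kernel couples $J_\nu$ to the second moment $K_\nu$, no attempt is made to reproduce the geometric contraction estimate $C_1<1$; here monotone convergence alone will deliver the limit.

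First I would record the uniform bound as an induction on $n$. The base case is immediate since $I^0_\nu=0\le B_\nu(T_M)$ and $T^0=0\le T_M$. For the inductive step one feeds $0\le I^n_\nu\le B_\nu(T_M)$ and $0\le T^n\le T_M$ into \eqref{IntFormR}; the computation carried out just before the statement is exact, the crucial point being that $\tfrac38a_\nu(3-\tfrac13)+(1-a_\nu)=1$ together with $\int_0^\tau e^{-\kappa_\nu(\tau-t)/\mu}\tfrac{\kappa_\nu}{\mu}\,dt=1-e^{-\kappa_\nu\tau/\mu}$, so that the ceiling $B_\nu(T_M)$ is propagated with coefficient exactly one and \emph{no} upper bound on $\kappa_\nu$ is needed. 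This yields $I^{n+1}_\nu\le B_\nu(T_M)$, and then $T^{n+1}=T[I^{n+1}]\le T[B_\nu(T_M)]=T_M$ because $T\mapsto B_\nu(T)$ is increasing while $\kappa_\nu(1-a_\nu)>0$. Combined with the monotonicity $0\le I^n_\nu\le I^{n+1}_\nu$ and $0\le T^n\le T^{n+1}$ --- which follows from $B_\nu$ nondecreasing and the positivity \eqref{p>0} of the phase function --- each sequence is nondecreasing and bounded at every point.

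Next I would pass to the limit. By the Monotone Convergence Theorem $I^n_\nu\uparrow I_\nu$ and $T^n\uparrow T$ pointwise, and the defining moments converge, $J^n_\nu\uparrow J_\nu$ and $K^n_\nu\uparrow K_\nu$ (the latter using $\mu^2\ge0$), while $B_\nu(T^n)\uparrow B_\nu(T)$ by continuity and monotonicity of the Planck function. Since every integrand in \eqref{IntFormR} is nonnegative and dominated by the fixed ceiling $B_\nu(T_M)$ against the integrable exponential kernels, monotone (equivalently dominated) convergence lets me pass to the limit inside all the $t$-integrals, so that $(I_\nu,T)$ satisfies the integral form of \eqref{RTSlab2R}. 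It remains to check that the coupling constraint $T=T[I]$, that is \eqref{IT}, survives in the limit: this is again monotone convergence, now in the frequency variable, applied to the identity $\int_0^\infty\kappa_\nu(1-a_\nu)B_\nu(T^n)\,d\nu=\int_0^\infty\kappa_\nu(1-a_\nu)J^n_\nu\,d\nu$.

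The one place deserving genuine care --- and what I expect to be the main obstacle --- is this last passage to the limit in the nonlinear coupling. For $T[\,\cdot\,]$ to be defined on the limit one needs the frequency integral $\int_0^\infty\kappa_\nu(1-a_\nu)B_\nu(T)\,d\nu$ to be finite and strictly increasing in $T$; finiteness is exactly what the uniform ceiling $B_\nu(T_M)$ buys us, through the integrability of $\nu\mapsto\kappa_\nu B_\nu(T_M)$ implicit in the definition of $T[\,\cdot\,]$, and strict monotonicity makes the relation invertible so that $T$ is recovered unambiguously from its limiting moments. Everything else is routine: because the iteration is monotone and sandwiched under $B_\nu(T_M)$, no contraction, compactness, or fixed-point argument is required, and the limit is automatically a genuine solution rather than merely a candidate.
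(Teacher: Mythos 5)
Your proposal is correct and takes essentially the same route as the paper's proof: monotonicity of the iterates (from $B_\nu$ nondecreasing together with the positivity \eqref{p>0} of the Rayleigh kernel), the uniform ceiling $I^n_\nu\le B_\nu(T_M)$, $T^n\le T_M$ propagated inductively through \eqref{IntFormR} via the identity $\tfrac38 a_\nu\left(3-\tfrac13\right)+(1-a_\nu)=1$, and then monotone convergence to identify the limit as a solution, with no contraction argument and hence no rate. The only difference is one of emphasis: you write out the passage to the limit in \eqref{IntFormR} and in the coupling relation \eqref{IT}, which the paper leaves implicit in the Rayleigh case (it is carried out explicitly only for isotropic scattering, in the proof of Theorem \ref{T-Conv}).
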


The argument above is based on the monotonicity of the sequences $I^n_\nu$ and $T^n$, and does not give any information on the convergence rate.

Finally, Theorem \ref{T-Uniq} holds verbatim for the problem \eqref{RTSlab2R}. Here are the (slight) modifications to the proof due to the Rayleigh phase function.

First, we slightly modify the argument concerning the term $D_2$ as follows. In the case of the Rayleigh phase function
$$
\begin{aligned}
D_2=\tfrac12\int_0^\infty\kappa_\nu a_\nu\int_{-1}^1(I_\nu-I'_\nu)_+(\mu)d\mu d\nu
\\
-\tfrac12\int_0^\infty\kappa_\nu a_\nu\int_{-1}^1\int_{-1}^1p(\mu,\mu')(I_\nu- J'_\nu)(\mu')s_+(I_\nu-I'_\nu)(\mu)d\mu' d\mu d\nu&\,.
\end{aligned}
$$
Since $p\ge 0$, one has
$$
p(\mu,\mu')(I_\nu- J'_\nu)(\mu')s_+(I_\nu-I'_\nu)(\mu)\le p(\mu,\mu')(I_\nu- J'_\nu)_+(\mu')
$$
so that
$$
\begin{aligned}
D_2\ge\tfrac12\int_0^\infty\kappa_\nu a_\nu\int_{-1}^1(I_\nu-I'_\nu)_+(\mu)d\mu d\nu
\\
-\tfrac12\int_0^\infty\kappa_\nu a_\nu\int_{-1}^1\int_{-1}^1p(\mu,\mu')(I_\nu- J'_\nu)_+(\mu')d\mu' d\mu d\nu=0
\end{aligned}
$$
since
$$
\tfrac12\int_{-1}^1p(\mu,\mu')d\mu=1\,.
$$
Therefore, following the proof of Theorem \ref{T-Uniq}, we obtain in the same manner the following conclusions
$$
\langle\mu(I_\nu-J_\nu)_+\rangle(\tau)=0\text{ for a.e. }\tau\in(0,Z)\,,
$$
and
$$
\begin{aligned}
s_+(I_\nu(\tau,\mu)-J_\nu(\tau,\mu))=s_+(T[I](\tau)-T[J](\tau))
\\
\text{ for a.e. }(\tau,\mu,\nu)\in(0,Z)\times(-1,1)\times(0,\infty)&\,,
\end{aligned}
$$
while
$$
(I_\nu-J_\nu)_+(0,-\mu)=(I_\nu-J_\nu)_+(Z,\mu)=0\qquad\text{ for a.e. }\mu\in(0,1)\,.
$$

Next we compute
$$
\begin{aligned}
\partial_\tau\left\langle\frac{\mu^2}{\kappa_\nu}(I_\nu-J_\nu)_+\right\rangle=-\tfrac12\int_0^\infty a_\nu\int_{-1}^1\mu(I_\nu-J_\nu)_+(\tau,\mu)d\mu d\nu 
\\
+\tfrac12\int_0^\infty a_\nu\int_{-1}^1\mu\int_{-1}^1p(\mu,\mu')(I_\nu-J_\nu)_+(\tau,\mu')d\mu' d\nu\,s_+(T[I](\tau)-T[J](\tau))
\\
-\langle (1-a_\nu)\mu((I_\nu-J_\nu)-(B_\nu(T[I])-B_\nu(T[J]))s_+(T[I]-T[J])\rangle
\\
=-\langle a_\nu\mu(I_\nu-J_\nu)s_+(T[I]-T[J])\rangle-\langle (1-a_\nu)\mu(I_\nu-J_\nu)s_+(T[I]-T[J])\rangle
\\
=-\langle\mu(I_\nu-J_\nu)s_+(T[I]-T[J])\rangle=-\langle\mu(I_\nu-J_\nu)_+\rangle=0&\,,
\end{aligned}
$$
since
$$
\int_{-1}^1\mu p(\mu,\mu')d\mu=\int_{-1}^1\mu(B_\nu(T[I])-B_\nu(T[J]))d\mu=0\,.
$$
Finally we integrate in $\tau\in(0,Z)$, and conclude as in the previous section that
$$
\begin{aligned}
(I_\nu-J_\nu)_+(0,-\mu)=0\text{ and }Q^+_\nu(\mu)\le R^+_\nu(\mu)\qquad\text{ for a.e. }\mu\in(0,1)
\\
\implies\left\langle\frac{\mu^2}{\kappa_\nu}(I_\nu-J_\nu)_+\right\rangle(\tau)=\left\langle\frac{\mu^2}{\kappa_\nu}(I_\nu-J_\nu)_+\right\rangle(0)=0&\,.
\end{aligned}
$$

\end{document}